\DeclareMathOperator{\dom}{dom}
\numberwithin{equation}{section}
\newcommand{\R}{{\Bbb R}}
\newcommand{\N}{{\Bbb N}}
\newcommand{\Ac}{{\mathcal A}}
\newcommand{\Fc}{{\mathcal F}}
\newcommand{\Gc}{{\mathcal G}}
\newcommand{\Jc}{{\mathcal J}}
\newcommand{\Kc}{{\mathcal K}}
\newcommand{\Lc}{{\mathcal L}}
\newcommand{\Oc}{{\mathcal O}}
\newcommand{\Rc}{{\mathcal R}}
\newcommand{\Tc}{{\mathcal T}}
\def\Es{{\mathscr{E}}}
\def\Ss{{\mathscr{S}}}
\newcommand{\supp}{\hbox{{\rm supp}}\,}
\newcommand{\GL}{\operatorname{GL\,}}
\def\1{\mathds{1}}
\newcommand{\vol}{\hbox{{\rm vol}}\,}
\newcommand{\Iinfty}[1]{I_{#1}^\infty}
\def\eps{{\varepsilon}}
\def\epi{{\rm epi}}
\def\raw{\rightarrow}
\def\vol{{\rm vol}}
\def\intt{{\rm int}}
\def\supp{{\rm supp}}
\def\cl{{\rm cl}}
\def\be{\begin{equation}}
\def\ee{\end{equation}}
\def\cvx{{\rm Cvx}}
\def\sc{{\rm SC}}  
\newtheorem{theorem}{Theorem}[section]
\newtheorem{proposition}[theorem]{Proposition}
\newtheorem{lemma}[theorem]{Lemma}
\theoremstyle{definition}
\newtheorem{definition}[theorem]{Definition}
\theoremstyle{remark}
\newtheorem{remark}[theorem]{Remark}
\begin{document}


\title{ The Existence of Extremizers of Blaschke-Santal\'o Type Inequalities}
\author[B. Li]{ Ben Li}
\address{School of Mathematical Sciences\\
Tel Aviv University }
\email{liben@mail.tau.ac.il}


\maketitle

\begin{abstract} 
   We discuss  a topological structure on families of convex functions and then apply it to show the existence of extrimizers for the functional Santal\'{o} inequality with respect to polar transform and its reverse.

\end{abstract}
\thispagestyle{empty}
\renewcommand{\thefootnote}{}
\footnotetext{2010 \emph{Mathematics Subject Classification}: 26A51 \and 52A41 \and 46B10}
\footnotetext{\emph{Key words and phrases}: Polar transformation, Blaschke-Santal\'{o} inequality, log-concave function.
	
This publication is part of a project that has received funding from the European Research Council (ERC) under the European Union's Horizon 2020 research and innovation programme (grant agreement No. 770127).}
\renewcommand{\thefootnote}{\arabic{footnote}}
\setcounter{footnote}{0}

\section{Introduction}

The Legendre transform \( \Lc\) is a  classical and well known operation which is an isomorphim of the set of closed proper convex functions \(\cvx(\R^n)\). For a convex function \( \phi\in \cvx(\R^n)\), it is defined by 
\(     \Lc\phi(y)=\sup_x\{  \langle x,y\rangle-\phi(x)  \}.\)
It is the only involution on \( \cvx(\R^n)\)\cite{artsteinmilman2009}.

\par 
The polarity transform \( \mathcal{A}\) of \(  \cvx_0(\R^n)\), the class of non-negative convex functions vanishing at 0 (called geometric convex functions), is defined as follows\cite{artsteinmilman2011} 
\[
\mathcal{A}\phi(y)=\begin{cases}
\sup_{\{x\in \R^n; \phi(x)>0\}}\frac{\langle x,y\rangle-1}{\phi(x)} \text{   if  } 0\neq y\in \{ \phi^{-1}(0)\}^\circ\\
0 \text{  if  } y=0\\
+\infty \text{  if  } y\neq \{ \phi^{-1}(0)\}^\circ
\end{cases}
\]
It was shown in \cite{artsteinmilman2011} that the epi-graph of \( \Ac[\phi]\) is  the reflection of the polar set of the \( \epi(\phi)\).

\par 

One of the central results in convex geometry concerns the volume product of a convex body and its polar body, which is sometimes refered to as Mahler product. It asserts that there is a universal constant \(c>0\) such that for any o-symmetric convex body \(K\in \R^n\),
\be
c^n\cdot w_n^2\le \vol(K)\vol(K^\circ)\le w_n^2,         \label{bsinequality}
\ee
where \( w_n=\vol (B_2^n)\).
The right hand side inequality is called Blaschke-Santal\'o inequality and was proved by Santal\'o in \cite{santalo1949}. The left hand side inequality is called Bourgain-Milman inequality which was shown in \cite{bourgainmilman1987}. It should be noticed that the Mahler product is invariant under non-singular affine transformations. The optimal lower bound for the Mahler product is still kept in mystery, known as Mahler conjecture. It asks whether the optimal lower bound of (\ref{bsinequality}) achieves among all o-symmetric convex bodies at the so called Hanner polytopes (e.g., the high dimensional cube or the cross polytope) or achieves among all convex bodies with centroid o at simplices.  A recent break through was due to Iriyeh and Shibata \cite{Iriyeh3dmahler}.  They proved  this conjecture for 3-dimensional convex bodies (see also \cite{FHMRZ20193dmahler} for a simplified proof).
\par 
The functional counterparts of volume product corresponding to different ``polarity" are studied extensively in the past decades. Let \( \Tc\in \{ \Lc, \Ac\}\). We define \( P_\Tc(\cdot)\) on \( \cvx(\R^n)\) if \( \Tc= \Lc\), on \( \cvx_0(\R^n)\) if \( \Tc=\Ac\) as following 
\be 
  P_\Tc(\phi):= \int_{\R^n} e^{-\phi(x)}dx\int_{\R^n} e^{-\Tc \phi(x)}dx   \label{functionalLA}
\ee
for \( \phi\in \cvx(\R^n)\), respectively, \( \phi\in \cvx_0(\R^n)\). 
\par 

\par 
 In the case of  \( \Tc=\Ac\), we consider the functional \( P_\Ac(\cdot)\) on the subset of geometric convex functions \(\cvx_0^+(\R^n)= \{ \phi\in \cvx_0(\R^n): 0<\int_{\R^n} e^{-\phi(x)}dx<\infty\}\).    The Blaschke-Santal\'o type inequality  were proved in \cite{artsteinslomka2015}, it states that there exist universal numerical constants \(c,C>0\) such that for any  integrable geometric log-concave function \(f=e^{-\phi}\) with centroid at 0, 
\be
 c^nw_n^2\le P_\Ac(\phi)= \int_{\R^n} e^{-\phi(x)}dx\int_{\R^n} e^{-\Ac\phi(x)}dx  \le (n!w_n)^2(1+\frac{C}{n})\label{polarsantalo}
 \ee
		The left hand side inequality holds also without the assumption that the centroid of \(\phi\) is at 0. The authors remarked also that if a  maximizer exists, there must be a  rotationally invariant maximizer. But it was not clear whether the maximizer and the minimizer exist. The result of this short note will give a affirmative answer to this question. 
\begin{theorem}\label{mainthm1}
	
			The functional \( P_\Ac(\cdot)\) achieves maximum and minimum on the subset of even functions in \( \cvx_0^+(\R^n)\). Moreover,
	  it achieves maximum and minimum on the subset of functions in \( \cvx_0^+(\R^n)\) with centroid 0.

\end{theorem}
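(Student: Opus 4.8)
The plan is to argue by the direct method of the calculus of variations. Since the two-sided bounds in \eqref{polarsantalo} already show that $P_\Ac$ is bounded above and below on the relevant classes, it suffices to produce, for an extremizing sequence, a subsequential limit lying inside the class at which $P_\Ac$ attains the extremal value. Two ingredients are needed: a compactness statement guaranteeing that a suitably normalized extremizing sequence converges in the topology introduced in the previous section, and the (semi)continuity of $P_\Ac$ in that topology. The decisive structural fact I would exploit first is the invariance of $P_\Ac$ under $\mathrm{GL}_n(\R)$: writing $\psi=\phi\circ T$ for $T\in \mathrm{GL}_n(\R)$, the geometric description of $\Ac$ as the reflected polar of the epigraph (recalled above) gives $\Ac\psi=(\Ac\phi)\circ (T^{\ast})^{-1}$, so that $\int e^{-\psi}=|\det T|^{-1}\int e^{-\phi}$ while $\int e^{-\Ac\psi}=|\det T|\int e^{-\Ac\phi}$, whence $P_\Ac(\psi)=P_\Ac(\phi)$. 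Both the evenness condition and the centroid-at-$0$ condition are preserved under these linear changes of variable, so I may normalize freely within each class.

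Using this invariance, I would replace each member $\phi_k$ of an extremizing sequence by a linear image in a fixed normalized position, for instance placing the convex body $K_k=\{\phi_k\le 1\}$, which contains the origin in its interior since $\phi_k(0)=0$, in John position, or equivalently placing the log-concave density $e^{-\phi_k}$ in isotropic position. This confines the level sets of $\phi_k$ between a fixed ball and a fixed multiple of it, which together with convexity and $\phi_k(0)=0$ yields uniform local Lipschitz bounds. Hence $(\phi_k)$ is precompact, and after passing to a subsequence it converges in the topology of the previous section to some convex $\phi$. One then checks that $\phi$ inherits non-negativity, convexity and $\phi(0)=0$, so $\phi\in\cvx_0(\R^n)$; the normalization together with the lower bound in \eqref{polarsantalo} prevents the limit from degenerating, so that $\int e^{-\phi}$ can escape neither to $0$ nor to $\infty$, giving $\phi\in\cvx_0^+(\R^n)$. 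Finally $\phi$ lies in the prescribed subclass (even, respectively centroid $0$), each of which is closed under this convergence.

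It remains to pass $P_\Ac$ to the limit. Here I would use that in the chosen topology the map $\phi\mapsto\int_{\R^n}e^{-\phi}$ is continuous, by dominated convergence together with the uniform control furnished by the normalization, and, more importantly, that the polarity transform $\phi\mapsto\Ac\phi$ is continuous. The latter follows from the continuity of polar duality of convex bodies with respect to the Hausdorff metric, valid as long as the origin remains uniformly in the interior of $\epi(\phi_k)$, which is exactly what the normalization secures. Combining these facts gives $P_\Ac(\phi_k)\to P_\Ac(\phi)$, so $\phi$ realizes the supremum, respectively the infimum, establishing the theorem in both the even and the centroid-$0$ cases.

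The main obstacle I anticipate is precisely the non-compactness of $\cvx_0^+(\R^n)$ and the potential degeneration of extremizing sequences: without the $\mathrm{GL}_n(\R)$-normalization the functions could flatten out or concentrate, driving one of the two integrals to $0$ or $\infty$ while keeping the product bounded, so that the limit leaves the class. Controlling this, that is, showing that the John or isotropic normalization makes the family genuinely precompact and keeps the origin uniformly interior to the epigraphs so that both $\int e^{-\phi}$ and the polarity transform behave continuously in the limit, is the crux of the argument. The continuity of $\Ac$, an order-reversing operation defined through a supremum, is the most delicate point, and it is here that the structural results of the previous section do the real work.
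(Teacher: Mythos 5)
Your proposal follows essentially the same route as the paper: exploit the $\GL(n)$-invariance of $P_\Ac$ to normalize an extremizing sequence into a John-position class, establish compactness of that class in the $\tau$-topology, and conclude via continuity of $\phi\mapsto\int e^{-\phi}$ and of the polarity transform $\Ac$ (the paper's Proposition \ref{congoftau} and Lemmas \ref{cptcvx0even}, \ref{cptcvx0c0}). The only points you gloss over that the paper treats with care are that in the centroid-$0$ case the John ellipsoid of $G_\phi(1)$ need not be centered at the origin (forcing the class $\Ss_{1,c}$ to be a union over translates $te_1+B_2^n$, $0\le t\le n$), and that non-degeneracy of the limit follows from explicit two-sided integral bounds on the normalized class rather than from \eqref{polarsantalo}; neither affects the correctness of your outline.
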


When \( \Tc=\Lc\),  the bounds for the value of the functional \( P_\Tc(\cdot)\) on the set of functions \( \cvx^+(\R^n)= \{ \phi\in \cvx(\R^n): 0<\int_{\R^n} e^{-\phi(x)}dx<\infty \}\) are estebalished and this  is refered to as the functional Blaschke-Santal\'o inequalities for log-concave functions \cite{klartagmilman2005}, i.e., there exists an absolute constant \(c>0\) such that for any integrable even log-concave function \( f= e^{-\phi}\)  
\be
\left(\frac{2\pi}{c}\right)^n\le P_\Lc(\phi)= \int_{\R^n} e^{-\phi(x)}dx\int_{\R^n} e^{-\Lc \phi(x)}dx   \le (2\pi)^n.    \label{legendresantalo}
\ee
The maximum for the functional \( P_\Lc(\cdot)\) on even convex functions was first observed by Ball \cite{ballphd} and later was extended to general case by Artstein-Avidan, Klartag and Milman \cite{AKM2004} and Fradelizi and Meyer \cite{FM2007}. In the latter two papers it is proved that the maximum of \( P_\Lc\) is achieved uniquely at Gaussians. The left hand side inequality of (\ref{legendresantalo}) is due to Klartag and Milman \cite{klartagmilman2005} and it is deduced from the Bourgain-Milman inequality. However, to the best of the author's knowledge, it is not clear whether the tight lower bound exists. 
The next theorem shows that the lower bound should also be saturated. 

\begin{theorem}\label{mainthm2}
	
	The functional \( P_\Lc(\cdot)\) achieves  minimum on the subset of even functions in \( \cvx^+(\R^n)\). Moreover, it achieves  minimum on the subset of functions in \( \cvx^+(\R^n)\) with centroid 0.

\end{theorem}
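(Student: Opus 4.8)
The plan is to run the direct method of the calculus of variations: take a minimizing sequence, use the symmetries of $P_\Lc$ to normalize it into a compact family, extract a convergent subsequence, and check that $P_\Lc$ is lower semicontinuous so that the limit is an actual minimizer. The starting point is that $P_\Lc$ has two exact invariances. First, if $\psi=\phi+c$ then $\Lc\psi=\Lc\phi-c$, so the two factors pick up reciprocal weights $e^{\mp c}$ and $P_\Lc(\psi)=P_\Lc(\phi)$. Second, if $\psi(x)=\phi(Tx)$ with $T\in\GL_n$, then $\Lc\psi(y)=\Lc\phi((T^{\ast})^{-1}y)$, and the Jacobian factors $|\det T|^{\mp1}$ cancel in the product, so again $P_\Lc(\psi)=P_\Lc(\phi)$. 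Crucially $P_\Lc$ is \emph{not} translation invariant, which is exactly why the statement is phrased for even functions or for functions with centroid $0$: each constraint kills the translation degree of freedom along which compactness would otherwise fail.

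Set $m=\inf P_\Lc$ over the admissible class. By the two-sided bound (\ref{legendresantalo}) one has $0<(2\pi/c)^n\le m\le (2\pi)^n<\infty$, so $m$ is finite and strictly positive. Pick a minimizing sequence $\phi_k$. Using the additive invariance, I would normalize $\int_{\R^n}e^{-\phi_k}=1$, so that each $e^{-\phi_k}$ is a log-concave probability density and $P_\Lc(\phi_k)=\int_{\R^n}e^{-\Lc\phi_k}$. Using the $\GL_n$-invariance, apply a linear map to put each $e^{-\phi_k}$ in isotropic position (barycenter at the origin, identity covariance). For even $\phi_k$ the barycenter is automatically $0$ and the normalizing map is linear, so evenness is preserved; in the centroid-$0$ case a linear map preserves the centroid-$0$ condition, so that class is preserved as well. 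In either case the normalized functions $\psi_k$ remain admissible and still satisfy $P_\Lc(\psi_k)=P_\Lc(\phi_k)\to m$.

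Next comes compactness. The family of isotropic log-concave probability densities on $\R^n$ is uniformly bounded and has uniformly sub-exponential tails, with constants depending only on $n$; consequently the convex functions $\psi_k=-\log e^{-\psi_k}$ are equi-bounded and locally equi-Lipschitz near the origin, the densities $e^{-\psi_k}$ are tight and uniformly integrable, and the first and second moments are uniformly integrable too. A Blaschke/Helly-type selection (equivalently, Arzel\`a--Ascoli applied to the convex functions $\psi_k$) then yields a subsequence converging locally uniformly on the interior of the limiting domain to a convex function $\psi$, which we may take closed by passing to its lower semicontinuous hull. Tightness forces $\int_{\R^n}e^{-\psi}=\lim_k\int_{\R^n}e^{-\psi_k}=1$, so $e^{-\psi}$ is again a log-concave probability density and $\psi\in\cvx^+(\R^n)$, while uniform integrability of the moments shows $\psi$ is again even (resp.\ has centroid $0$). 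In particular the limit does not degenerate and stays admissible.

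Finally I would verify lower semicontinuity. For convex functions, local uniform convergence $\psi_k\to\psi$ is equivalent to epi-convergence, and the Legendre transform is bicontinuous with respect to epi-convergence; hence $\Lc\psi_k$ epi-converges to $\Lc\psi$, so $e^{-\Lc\psi_k}\to e^{-\Lc\psi}$ pointwise almost everywhere, and Fatou's lemma gives $\int_{\R^n}e^{-\Lc\psi}\le\liminf_k\int_{\R^n}e^{-\Lc\psi_k}$. Combining this with $\int_{\R^n}e^{-\psi}=1$,
\[
P_\Lc(\psi)=\int_{\R^n}e^{-\Lc\psi}\le\liminf_k\int_{\R^n}e^{-\Lc\psi_k}=\liminf_k P_\Lc(\psi_k)=m.
\]
Since $m$ is the infimum, also $P_\Lc(\psi)\ge m$, whence $P_\Lc(\psi)=m$ and $\psi$ is the desired minimizer, even in the first case and centroid-$0$ in the second. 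The hard part is the compactness step: one must rule out every form of degeneration of the normalized minimizing sequence (mass escaping to infinity, flattening to $0$, or concentrating), and this is precisely where the isotropic normalization together with the uniform estimates for isotropic log-concave densities -- the topological structure on convex functions developed earlier -- carry the argument. The lower-semicontinuity step is comparatively soft once the bicontinuity of the Legendre transform under epi-convergence is available.
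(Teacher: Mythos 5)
Your proposal is correct in outline and proves the right statement, but the load-bearing step --- compactness of the normalized minimizing sequence --- is carried out by a genuinely different mechanism than the paper's. You normalize to isotropic position and invoke the standard package of uniform estimates for isotropic log-concave probability densities (a dimension-dependent upper bound on $\|f\|_\infty$, i.e.\ finiteness of the isotropic constant for fixed $n$, sub-exponential tails, and implicitly that the support contains a fixed ball about the origin so that Arzel\`a--Ascoli applies on a common neighborhood); these facts are true and standard but are asserted rather than proved, and they are the entire content of your compactness claim. The paper instead normalizes by applying John's theorem to the single sublevel set $G_{\tilde\phi}(2n)$ of $\tilde\phi=\phi-\inf\phi$, using Fradelizi's lemma only to guarantee $0\le\tilde\phi(0)\le n$, and then obtains compactness of the resulting class $\Ss_2$ by showing it is closed inside the compact space of closed epigraphs in the $\tau$-topology --- no estimates on densities are needed at all. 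A second difference: you settle for lower semicontinuity of $P_\Lc$ via Fatou (which suffices for a minimum), whereas the paper proves full continuity of $P_\Lc$ under $\tau$-convergence (via the bicontinuity of $\Lc$ and the Artstein-Avidan--Klartag--Milman convergence lemma), which is what also yields the maximum in the companion Theorem \ref{mainthm1}. Your route buys a more familiar probabilistic normalization at the cost of importing the theory of isotropic log-concave measures; the paper's route is more self-contained and uniform across the $\Lc$, $\Ac$ and $\Jc$ cases. If you keep your version, you should cite the isotropic estimates explicitly and justify that the limiting potential's domain is full-dimensional before applying Arzel\`a--Ascoli on an exhaustion.
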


On \( \cvx_0(\R^n)\) another isomorphim which is called gauge transform, denoted by \( \mathcal{J}\),  was studied in \cite{artsteinmilman2011} which is given by 
\[
(\mathcal{J}\phi)(x) = \inf \left\{  r>0: \phi\left(\frac{x}{r}\right)\le \frac{1}{r}   \right\}.\]
A key observation  \cite{artsteinmilman2011}  is  that  \( \Ac\circ\Lc = \Lc \circ \Ac = \Jc\). The polar transform and the guage transform are the only order reversing involutions defined on \( \cvx_0\) \cite{artsteinmilman2011}.

\par 
Using the same method it can be shown that the functional \(P_\Jc(\phi)= \int e^{-\phi}\bigg/\int e^{-\Jc \phi}\) has a maximum and  a minimum on \( \cvx_0^+\). 
        In fact, a Blaschke-Santalo type inequality for \( \Jc\) transform can be established as well. It was obtained by D. Florentin and A. Segal very recently \cite{alexdan} that on \(\cvx_0^+\),
		\be
		  c_n^{-1}  \le P_\Jc(\phi)= \int e^{-\phi}\bigg/\int e^{-\Jc \phi}\le c_n
		 \ee
		 where the cosntant \( c_n\) depends on the dimension and is asymptotically \(n!\). 
		They are able to characterize the extremal occations. The maximizer  is a truncated norm which is truncated at  height asymptotically $\frac{1}{n}$.  The minimizer is simply the \(\Jc\) transform of the maximizer \cite{alexdan}. 

\par 
\vskip 3mm
For a convex function \( \phi: \R^n\to \R\cup \{\infty\}\), we denote its sub-level sets by
\( G_\phi(s)= \{ x\in \R^n: \phi(x)\le s\}, s\in \R\) and its centroid (with respect to Lebesgue measure) by 
\[c(\phi)=\frac{\int_{\R^n} xe^{-\phi(x)}dx}{\int_{\R^n} e^{-\phi(x)}dx}.
\]
The convex indicator function \( I_K(x)=0\) whenever \( x\in K\) and \( I_K(x)=\infty\) elsewhere. For two functions \( \phi\) and \( \psi\),  \( \phi\wedge\psi(x)= \min\{ \phi(x), \psi(x)\}; \phi\vee\psi(x)= \max\{ \phi(x), \psi(x)\}\).   
In this note we denote by \( \Fc\) the collection of closed sets, \( \Kc\) the collection of compact sets and \( \Gc\) the collection of open sets. All sets in these collections should be understood of subsets in  \( \R^n\) (or \(\R^{n+1}\)) equipped with the standard topology. 
\par 
\vskip 3mm
In section 2, we will briefly  recall the definition of \( \tau\)-topology defined on the set of functions, collect useful lemmae and derive the continuity of polar transform with respect to \( \tau\)-topology. Section 3 and 4 are devoted to the proof of the main theorems.  

\vskip 3mm
\noindent
{\bf Acknowledgement}
\par
\noindent
The author is very grateful to Prof. Artstein-Avidan, Dr. Slomka and Dr. Mussnig for insightful discussions and their patience. 
This publication is part of a project that has received funding from the European Research Council (ERC) under the European Union's  Horizon 2020 research and innovation programme (grant agreement No. 770127).

\section{Topology for Closed Sets and Functions}


\par 
\noindent
In this section we first describe the \(\tau\) topology for the set of closed sets in \( \R^n\). Some results presented here can be found in Chapter 4 of \cite{RockafellarWets1998} and Section III of \cite{dolecki1983}. Then we discuss \(\tau\)-topology for convex functions.
We also collect a few known results on functions which will be used repeatedly. Other known lemmae are introduced in the sections in which they are used.

\subsection{Topology of the Space of Closed Sets}

We start with defining semi-limit of sets. For a sequence of subsets \( \{ K_m\}_{m\in \N}\), 
\( \limsup_m K_m\) consists of  elements that are limits of elements contained in \(K_m\) for infinitely many \(m\) and \( \liminf_m K_m\) consists of  elements that are limits of elements contained in \(K_m\) for all but finitely many \(m\). That is, 
\begin{definition}[\cite{RockafellarWets1998}]
\begin{eqnarray*}
	 \limsup_{m\to \infty } K_m &:=& \{ x: \exists \{ m_j\} \text{ such that } x_{m_j}\in K_{m_j} \text{ and } x_{m_j}\to x \text{ as  } j\to \infty \};
\end{eqnarray*}
\[ \liminf_{m\to \infty}  K_m := \{ x:  \exists M \text{ so that  for } m>M, \exists x_m\in K_m \text{ and } x_m\to x \text{ as } m\to \infty  \}  .   \]


 We say that a sequence of sets \( K_m\) converges in \(\tau\)-topology to a set \(K\) , denoted by \( K_m\xrightarrow{\tau} K\),  if 
  \[ \limsup K_m\subset K \subset \liminf K_m.\]
\end{definition}
\begin{remark}
	(1) Note that for convex bodies, this topology is equivalent to the topology induced by Hausdorff metric \( d_H\) (see, e.g., Theorem  1.8.8 of  \cite{SchneiderBook}). It's also called \( \Gamma\)-topology in some literature. \\
	(2) The \( \tau\)-topology of closed sets is generated by the subbase of open sets 
	\be  \{ \Fc^K:  K \text{ is compact }\} \text{  and   } \{ \Fc_G: G \text{ is open }\}.     \label{subbaseoftau}           \ee 
	where \( \Fc\) is the collection of closed subsets of \( \R^n\) and for any set \( S\in \R^n\), 
	\[    \Fc^S:= \{ F\in \Fc: F\cap S= \emptyset\}, \ \ \ \ \Fc_S:= \{ F\in \Fc: F\cap S\neq \emptyset\}. \]
	(3) The \( \tau\)-topology of closed sets is metrizable (see, e.g., \cite{RobertsVarbergBook1973}). Thus compactness is equivalent to sequencial compactness under this topology. Moreover, with this topology the space of closed subsets of \(\R^n\) is compact \cite{dolecki1983}.
\end{remark}

\par 
Next lemma will be useful. 
\begin{lemma}[\cite{RockafellarWets1998}]\label{continuityofsetlimit}
For a sequence \( \{ C_m\}_m\) of convex sets of \( \R^n\), \( \liminf_{m\to \infty}C_m\) is convex, and so is \( \lim_{m\to \infty}C_m\) whenever it exists.
\end{lemma}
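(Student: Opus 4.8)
The plan is to first establish convexity of \( \liminf_{m\to\infty} C_m \) directly from the definition, and then observe that the claim for \( \lim_{m\to\infty} C_m \) follows at once. Indeed, \( \liminf_m C_m \subset \limsup_m C_m \) always holds, so the existence of \( \lim_m C_m \) forces \( \limsup_m C_m \subset \liminf_m C_m \), whence \( \lim_m C_m = \liminf_m C_m \). Thus convexity of the limit reduces entirely to convexity of the lower limit.

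To show \( \liminf_m C_m \) is convex, I would fix two points \( x, y \in \liminf_m C_m \) and a scalar \( \lambda \in [0,1] \), and exhibit an approximating sequence witnessing that \( z := \lambda x + (1-\lambda) y \in \liminf_m C_m \). By the definition of \( \liminf \), there is an index \( M_x \) such that for every \( m > M_x \) one may choose \( x_m \in C_m \) with \( x_m \to x \), and likewise an index \( M_y \) and points \( y_m \in C_m \) with \( y_m \to y \) for all \( m > M_y \).

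The crucial feature of the lower limit, as opposed to the upper limit, is that both approximating sequences are eventually defined on the \emph{same} tail of indices. Setting \( M = \max\{M_x, M_y\} \), for every \( m > M \) both \( x_m \) and \( y_m \) lie in the single set \( C_m \), so convexity of \( C_m \) yields \( z_m := \lambda x_m + (1-\lambda) y_m \in C_m \). Since \( z_m \to \lambda x + (1-\lambda) y = z \), the defining condition for \( \liminf \) is met by \( z \), proving \( z \in \liminf_m C_m \).

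I expect no serious obstacle here; the single point requiring care is exactly the alignment of indices just noted. Had one attempted the same argument for \( \limsup_m C_m \), it would break down, since the subsequences of indices along which \( x \) and \( y \) are approached need not coincide, and there is in general no common set \( C_m \) containing representatives of both. This is precisely why the statement is made for \( \liminf \) and for \( \lim \) (where the two semi-limits agree), and not for \( \limsup \).
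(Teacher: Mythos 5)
Your proof is correct: the paper itself gives no proof of this lemma (it is quoted directly from Rockafellar--Wets), and your argument is precisely the standard one, with the key point correctly identified --- for $\liminf$ the two approximating sequences $x_m$ and $y_m$ live in the \emph{same} set $C_m$ on a common tail of indices, so convexity of each $C_m$ passes to the limit, and the statement for $\lim_{m\to\infty}C_m$ follows since the limit, when it exists, coincides with the $\liminf$.
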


\subsection{Limit of Functions}
Now we describe \(\tau\)-topology for functions. 
\begin{definition}[\cite{RockafellarWets1998}]
	A sequence of functions \( \phi_m\) converges to \( \phi\) in \(\tau\)-topology if \[ \epi(\psi_m)\xrightarrow{\tau} \epi(\psi),\]
	written as \( \phi_m \xrightarrow{\tau}\phi\).  We say \( \phi= \tau\)-\(\liminf \phi_m \) if \(\epi(\phi)=\liminf \epi(\phi_m) \);
	\( \phi= \tau\)-\(\limsup \phi_m \) if \(\epi(\phi)=\limsup \epi(\phi_m) \). 
\end{definition} 
\par 
Note that 
\( I_{K_m}^\infty \xrightarrow{\tau} I_K^\infty \) if and only if \(    K_m\to K\). We shall see that the polar transform for geometric convex functions  is continuous with respect to \(\tau\)-topology. 
Before we prove this result, we collect a few well known results for functions associated with \( \tau\)-topology. 
\par 
\begin{lemma}[\cite{mosco1971}]\label{convlevelset}
	Suppose that \( \{\phi_m\}, \phi\in \cvx(\R^n)\) are such that \( \phi_m\xrightarrow{\tau} \phi \). Then the level sets \( G_{\phi_m}(t)\to G_\phi(t)\) for \( t\neq \inf \phi\).
\end{lemma}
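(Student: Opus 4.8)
The plan is to unwind the definition of $\tau$-convergence of epigraphs into the two familiar one-sided conditions of epi-convergence, and then translate each into one of the two inclusions $\limsup_m G_{\phi_m}(t) \subset G_\phi(t) \subset \liminf_m G_{\phi_m}(t)$ that constitute $G_{\phi_m}(t) \xrightarrow{\tau} G_\phi(t)$. Concretely, the hypothesis $\limsup_m \epi(\phi_m) \subset \epi(\phi)$ says that whenever $x_{m_j} \to x$ along a subsequence one has $\phi(x) \le \liminf_j \phi_{m_j}(x_{m_j})$ (a lower-semicontinuity statement), while $\epi(\phi) \subset \liminf_m \epi(\phi_m)$ supplies, for each $x$, a recovery sequence $x_m \to x$ with $\limsup_m \phi_m(x_m) \le \phi(x)$. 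I would first record this translation and then treat the two inclusions separately.

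For the outer inclusion $\limsup_m G_{\phi_m}(t) \subset G_\phi(t)$, I would take $x \in \limsup_m G_{\phi_m}(t)$, so that $x_{m_j} \to x$ with $\phi_{m_j}(x_{m_j}) \le t$ along some subsequence; the lower-semicontinuity condition then gives $\phi(x) \le \liminf_j \phi_{m_j}(x_{m_j}) \le t$, i.e. $x \in G_\phi(t)$. This step uses only one half of the hypothesis and is in fact valid for every $t$.

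The inner inclusion $G_\phi(t) \subset \liminf_m G_{\phi_m}(t)$ is where the assumption $t \ne \inf \phi$ enters, and it splits into two cases. For a strict sublevel point, $\phi(x) < t$, the recovery sequence $x_m \to x$ satisfies $\limsup_m \phi_m(x_m) \le \phi(x) < t$, so $\phi_m(x_m) \le t$ for all large $m$ and hence $x \in \liminf_m G_{\phi_m}(t)$; thus $\{\phi < t\} \subset \liminf_m G_{\phi_m}(t)$. For a boundary point $\phi(x) = t$ the recovery sequence only yields $\limsup_m \phi_m(x_m) \le t$, which need not place $x_m$ inside the sublevel set, so I would instead approximate $x$ from inside. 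Assuming $G_\phi(t) \ne \emptyset$ (otherwise the inclusion is vacuous), the condition $t \ne \inf \phi$ forces $\inf \phi < t$, so some $x_0$ satisfies $\phi(x_0) < t$; convexity then gives $\phi\big((1-\lambda)x_0 + \lambda x\big) \le (1-\lambda)\phi(x_0) + \lambda t < t$ for $\lambda \in [0,1)$, so the whole open segment lies in $\{\phi < t\} \subset \liminf_m G_{\phi_m}(t)$ and tends to $x$ as $\lambda \to 1$. Since the inner limit of any sequence of sets is closed, I conclude $x \in \liminf_m G_{\phi_m}(t)$ as well.

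I expect the boundary case $\phi(x) = t$ to be the only genuine obstacle: it is exactly here that level-set convergence can break down at $t = \inf \phi$, where $\{\phi < t\}$ is empty and cannot be used to reach the minimizing set, and it is resolved by combining convexity of $\phi$ with the closedness of $\liminf_m G_{\phi_m}(t)$. The remaining points are routine, and beyond the hypothesis I would only need to invoke separately the two standard facts that the inner limit of a sequence of sets is closed and that the lower-semicontinuity and recovery conditions are precisely equivalent to $\tau$-convergence of the epigraphs.
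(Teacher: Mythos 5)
The paper offers no proof of this lemma at all; it is simply quoted from Mosco's work, so there is nothing internal to compare against. Your argument is correct and is the standard proof of exactly this statement: the outer inclusion $\limsup_m G_{\phi_m}(t)\subset G_\phi(t)$ from the lower-semicontinuity half of epi-convergence (valid for every $t$), and the inner inclusion via recovery sequences for strict sublevel points plus a convexity segment argument for boundary points, which is precisely where $t\ne\inf\phi$ and the closedness of $\liminf_m G_{\phi_m}(t)$ are needed. The two auxiliary facts you defer to (closedness of the inner set limit, and the equivalence of $\tau$-convergence of epigraphs with the liminf/recovery-sequence conditions) are both true and standard, so the proof is complete as sketched.
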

%
\par 
\begin{lemma} [\cite{AKM2004}]\label{lemmafromAKM}
	Suppose that \( f_m=e^{-\phi_m}, f=e^{-\phi}, m=1,2,...\) are log-concave functions where \( \phi_m, \phi\) are convex functions satisfying  
	\( f_m\to f\) pointwise on a dense set of \( \R^n\). Then  
	\begin{enumerate}[(1)]
		\item \(\int f_m\to \int f\).
	   \item  \(\int xf_m\to \int xf\) if \( \int f<\infty\).
	   \item   \( \mathcal{L}(f_m)\to \mathcal{L}(f) \) locally uniformly on the interior of \( \supp(\Lc (f))\) where \(\Lc\) is the Legendre transform. 
		\end{enumerate}
\end{lemma}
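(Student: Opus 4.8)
The plan is to reduce all three statements to two facts: (i) that pointwise convergence on a dense set upgrades, for convex functions, to locally uniform convergence on the interior of the domain of the limit, and (ii) that an integrable log-concave limit forces a uniform exponential tail bound on the sequence, so that every passage to the limit is governed by dominated convergence.

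First I would translate the hypothesis into a statement about the potentials. Writing $\phi_m,\phi$ for the convex potentials, the hypothesis $f_m\to f$ on a dense set $D$ means $\phi_m\to\phi$ on $D$. Let $U=\intt(\dom\phi)$, a nonempty open convex set (when $\int f\in(0,\infty)$ the domain is full-dimensional). Since the $\phi_m$ are convex and converge to the finite convex function $\phi$ on the dense set $D\cap U$, a classical theorem on convex functions \cite{RockafellarWets1998} gives $\phi_m\to\phi$ uniformly on compact subsets of $U$; hence $f_m\to f$ locally uniformly on $U$. A short convexity argument then shows $f_m\to 0$ on the open exterior of $\dom\phi$: at an exterior point $x_*$, choosing $x_0\in U$ and a point $z\in D$ lying in the exterior near the segment $[x_0,x_*]$, where $\phi_m(z)\to\infty$, convexity propagates the blow-up of $\phi_m$ to $x_*$. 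As $\partial(\dom\phi)$ is Lebesgue-null, we conclude $f_m\to f$ almost everywhere.

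The crux is the uniform exponential domination, valid when $\int f<\infty$. Integrability and log-concavity give $\phi(x)\ge a|x|-b$ for constants $a,b>0$; equivalently, along every ray from a fixed interior point $x_0$ the function $\phi$ grows at least linearly, with a slope bounded below uniformly in the direction. I would fix a large radius $R$ and use the locally uniform convergence on the sphere $\{|x-x_0|=R\}\subset U$ together with $\phi_m(x_0)\to\phi(x_0)$; since $\phi_m$ restricted to any ray through $x_0$ is convex, its difference quotients are monotone, so the uniform lower bound on the sphere yields $\phi_m(x)\ge c|x|-C$ outside $B(x_0,R)$ for all large $m$. Together with the uniform upper bound for $f_m$ on $B(x_0,R)$ (from local uniform convergence and $\inf\phi>-\infty$), this produces an integrable majorant $g(x)=C'e^{-c|x|}$, independent of $m$, with $f_m\le g$ for all large $m$. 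This uniform integrability is the main obstacle; everything else is a consequence.

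With these in hand the three parts follow. For (1), Fatou's lemma applied to $f_m\to f$ a.e. always gives $\int f\le\liminf_m\int f_m$, which already settles the case $\int f=\infty$; when $\int f<\infty$ the majorant $g$ and dominated convergence upgrade this to $\int f_m\to\int f$. For (2), the same majorant controls $|x|f_m(x)\le|x|g(x)\in L^1$, so dominated convergence gives $\int xf_m\to\int xf$. For (3), one clean route is to note $\phi_m\xrightarrow{\tau}\phi$ (epi-convergence), which follows from the locally uniform convergence on $U$ together with standard epi-convergence criteria for convex functions \cite{RockafellarWets1998}, and then invoke the continuity of the Legendre transform with respect to $\tau$-convergence (Wijsman's theorem) to get $\Lc\phi_m\xrightarrow{\tau}\Lc\phi$; since the $\Lc\phi_m$ are convex, epi-convergence again upgrades to locally uniform convergence on $\intt(\dom\Lc\phi)=\intt(\supp\Lc f)$, whence $\Lc(f_m)=e^{-\Lc\phi_m}\to e^{-\Lc\phi}=\Lc(f)$ locally uniformly there. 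Alternatively, and avoiding any appeal to epi-convergence of the transforms, the uniform coercivity from the previous paragraph confines the maximizers in $\Lc\phi_m(y)=\sup_x(\langle x,y\rangle-\phi_m(x))$ to a compact set locally uniformly in $y$, so the supremum may be taken over a fixed compact region on which $\phi_m\to\phi$ uniformly, yielding $\Lc\phi_m(y)\to\Lc\phi(y)$ locally uniformly.
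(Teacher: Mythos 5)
You should first note that the paper does not prove this lemma at all --- it is quoted verbatim from \cite{AKM2004} --- so the comparison is really with the argument of that source. Your overall strategy matches it: upgrade dense pointwise convergence to locally uniform convergence on $U=\intt(\dom\phi)$ by convexity, show $f_m\to 0$ off $\cl(\dom\phi)$ by propagating blow-up along (perturbed) segments, conclude a.e.\ convergence since the boundary of a convex set is Lebesgue-null, and then manufacture one integrable majorant so that (1) and (2) follow from dominated convergence. Two caveats before the main point: statement (1) is actually false without the assumption $0<\int f$ that you use implicitly (thin spikes such as $f_m=e^{\,m-I^\infty_{B(0,1/m)}}$ tend to $0$ on the dense set $\R^n\setminus\{0\}$ while $\int f_m\to\infty$), so degenerate limits must be excluded --- harmless here, since the paper only applies the lemma in the regime $0<\int f<\infty$; and your first route to (3), via Wijsman's theorem, is legitimate as a citation of a classical result but is essentially the statement being proved (it is Proposition \ref{contioflegendre}, which the paper in turn derives from this lemma), while your second route confines the maximizers of $\langle x,y\rangle-\phi_m(x)$ only for $|y|$ smaller than the coercivity constant $c$, not for all $y$ in $\intt(\dom\Lc\phi)$.

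The genuine gap is in the construction of the uniform majorant. You place a sphere $\{|x-x_0|=R\}$ of large radius inside $U$ and exploit the slope gap between $\phi_m(x_0)$ and the minimum of $\phi_m$ on that sphere. This works only when $\dom\phi=\R^n$. If $\dom\phi$ is bounded --- the typical case in this paper, e.g.\ $f=e^{-I_K^\infty}$ or the functions of $\Ss_e$ --- no such sphere exists, and even for small $R$ the function $\phi$ may be constant on its domain, so there is no gap $\delta>0$ to propagate along rays; if $\dom\phi$ is unbounded but proper, every large sphere crosses $\partial(\dom\phi)$, where you control $f_m$ neither by locally uniform convergence (you are not in $U$) nor by uniform convergence to $0$ (you are not in the open exterior). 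A robust replacement: fix $B(x_0,r)\subset U$ with $f\ge 2c$ there, pick $\eps<c$ and $\rho$ with $f<\eps/2$ outside $B(x_0,\rho/2)$ (possible because $0<\int f<\infty$ forces exponential decay of $f$), and show by contradiction that $\{f_m\ge\eps\}\subset B(x_0,\rho)$ for all large $m$: if $f_m(z)\ge\eps$ with $|z-x_0|>\rho$, log-concavity gives $f_m\ge\eps$ on a ball of radius $r/4$ contained in $\conv(\{z\}\cup B(x_0,r))\setminus B(x_0,\rho/2)$, and density of the convergence set produces a point where $f_m\ge\eps$ but $f<\eps/2$. Only after this confinement does your monotone-difference-quotient argument along rays from $x_0$ apply (now launched from the sphere of radius $2\rho$, on which $f_m<\eps$ no matter where $\partial(\dom\phi)$ lies), yielding $f_m(x)\le c\,(\eps/c)^{|x-x_0|/2\rho}$ there. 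The same convex-hull device is needed to justify $\sup_m\|f_m\|_\infty<\infty$, which you assert from local uniform convergence even though the supremum of $f_m$ need not be approached inside $U$.
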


\begin{lemma}[\cite{RockafellarWets1998}]\label{ptwconvergencedenseset}
	For any sequence \( \{\phi_m\}\) of convex functions on \( \R^n\) such that \( \phi_m\xrightarrow{\tau} \phi\), the function \( \phi\) is convex . If, moreover, \( dom(\phi)\) has nonempty interior,  the following three statements are equivalent: 
		\begin{enumerate}[(1)]
			\item \( \phi_m\xrightarrow{\tau} \phi\)
			\item there is a dense subset \( A\subset \R^n\) such that \( \phi_m(x)\to \phi(x) \) for all \(x\) in \(A\).
			\item \( \phi_m\) converges uniformly to \( \phi\) on every compact set \(C\) that does not contain a boundary point of \(dom(\phi)\).
		\end{enumerate}
\end{lemma}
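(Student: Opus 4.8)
The plan is to handle the convexity of the limit separately and then to prove the cycle $(1)\Rightarrow(3)\Rightarrow(2)\Rightarrow(1)$, concentrating the real work in a single equi-Lipschitz estimate that drives all three implications. That $\phi$ is convex is immediate: since $\phi_m\xrightarrow{\tau}\phi$ means $\epi(\phi)=\lim_m\epi(\phi_m)$ as sets, Lemma~\ref{continuityofsetlimit} shows this limit is convex, and a function with convex epigraph is convex. The analytic engine I would isolate first is the following: if a sequence of convex functions either is $\tau$-convergent to $\phi$ or is pointwise convergent to $\phi$ on a dense subset, and $\intt(\dom\phi)\neq\emptyset$, then the $\phi_m$ are eventually uniformly bounded and uniformly Lipschitz on every compact $C\subset\intt(\dom\phi)$. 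The upper bound comes from choosing a simplex with vertices near which $\phi_m$ is controlled (by recovery sequences, respectively by density of the convergence set) and invoking convexity; a finite lower bound at a single interior point together with this upper bound then yields, by the standard slope estimate for convex functions, a uniform Lipschitz constant on a slightly smaller ball.

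Granting this engine, $(1)\Rightarrow(3)$ proceeds as follows. The liminf half of $\tau$-convergence gives $\liminf_m\phi_m(x)\ge\phi(x)$ from the constant sequence, while equicontinuity lets me transfer the recovery-sequence value from $x_m\to x$ back to $x$ itself; hence $\phi_m\to\phi$ pointwise on $\intt(\dom\phi)$, and equi-Lipschitzness upgrades this to uniform convergence on compacta of $\intt(\dom\phi)$. A compact set $C$ avoiding $\partial(\dom\phi)$ splits into a compact piece in $\intt(\dom\phi)$ and a compact piece in the open exterior $\R^n\setminus\cl(\dom\phi)$; on the latter $\phi\equiv+\infty$, and a subsequence-and-compactness argument against the liminf inequality forces $\phi_m\to+\infty$ uniformly there. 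The implication $(3)\Rightarrow(2)$ is then essentially free: taking $A=\R^n\setminus\partial(\dom\phi)$, which is dense because a convex set with nonempty interior has nowhere dense boundary, the uniform convergence on interior compacta and the divergence on exterior compacta give $\phi_m(x)\to\phi(x)$ at every point of $A$.

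The delicate direction is $(2)\Rightarrow(1)$, and I expect the exterior behaviour to be the main obstacle. On the interior the engine again yields uniform convergence on compacta, which supplies recovery sequences (take $x_m=x$) and the liminf inequality for limit points in $\intt(\dom\phi)$. For a boundary point $x$ with $\phi(x)<\infty$ I would prove the liminf inequality by pushing a putative cheap value along the segment from an interior point and letting the interior endpoint tend to $x$, using lower semicontinuity of $\phi$ to reach a contradiction; recovery sequences at boundary points follow by approaching $x$ along such a segment and diagonalizing. The genuinely nontrivial case is a limit point $x_0\in\R^n\setminus\cl(\dom\phi)$: here the pointwise data on $A$ is thin, and the one-dimensional slope estimate only delivers a finite lower bound, so it cannot force $\phi_m(x_m)\to+\infty$ by itself. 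The fix I would use is a cone argument: if $\phi_m(x_m)\le M$ along a subsequence with $x_m\to x_0$, then convexity bounds $\phi_m$ by a constant on the convex hull $\conv\!\big(B(z,r)\cup\{x_m\}\big)$ of a fixed interior ball $B(z,r)\subset\intt(\dom\phi)$, on which $\phi_m$ is uniformly bounded, together with $\{x_m\}$; this hull contains a fixed ball about an exterior point $y_0\in(z,x_0)$ for all large $m$, so any $a\in A$ in that ball is exterior and yet has $\phi_m(a)$ bounded, contradicting $\phi_m(a)\to\phi(a)=+\infty$. With the liminf inequality established everywhere and recovery sequences in hand, $\epi(\phi_m)\to\epi(\phi)$, which is $(1)$, follows.
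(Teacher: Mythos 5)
The paper never proves this lemma: it is quoted from Rockafellar--Wets and used as a black box, so there is no internal argument to compare yours against. What you have written is essentially the standard proof of that theorem (Theorem 7.17 in Rockafellar--Wets), and it is sound: convexity of the limit via Lemma~\ref{continuityofsetlimit}; an equi-Lipschitz ``engine'' on interior compacta obtained from upper bounds at simplex vertices plus a lower bound at one point; the segment argument at boundary points; and the cone argument for exterior limit points in $(2)\Rightarrow(1)$, which is indeed the one place where one-dimensional slope estimates do not suffice and where your use of the density of $A$ in a fixed exterior ball is exactly the right move. Three small repairs are needed. First, a compact subset of $\intt(\dom(\phi))$ need not be contained in any single simplex lying inside $\intt(\dom(\phi))$ (a ball of radius $1-\delta$ in the open unit ball cannot be, since a containing simplex has circumradius at least $n(1-\delta)$), so the upper bound must be produced locally, one small simplex per point, and then patched over $C$ by a finite subcover; your phrasing suggests one global simplex. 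Second, in $(2)\Rightarrow(1)$ you handle boundary points with $\phi(x)<\infty$ and points outside $\cl(\dom(\phi))$, but not boundary points with $\phi(x)=+\infty$; the same segment-plus-lower-semicontinuity argument disposes of them and the case should be mentioned. Third, lower semicontinuity of $\phi$ is genuinely necessary for $(2)\Rightarrow(1)$ --- pointwise convergence on a dense set cannot distinguish $\phi$ from its closed hull --- and you rightly invoke it; it is implicit in the paper's standing convention that convex functions are closed, but it should be recorded as a hypothesis of the equivalence.
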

\noindent 
\par 
\noindent 
\begin{lemma}[\cite{robert1974}]\label{robert1974:ptwint}
	If \( \phi_m\) and \(\phi\) are closed convex functions with \( \phi_m\xrightarrow{\tau} \phi\), then \( \phi_m\to \phi\) pointwise on \( \intt \dom(\phi)\).
\end{lemma}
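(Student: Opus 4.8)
The plan is to work directly from the definition of $\tau$-convergence, $\epi(\phi_m)\xrightarrow{\tau}\epi(\phi)$, which unpacks into the two set inclusions $\limsup_m\epi(\phi_m)\subset\epi(\phi)\subset\liminf_m\epi(\phi_m)$. Reading these in terms of the functions gives the two familiar epi-convergence inequalities: (i) for every sequence $x_m\to x$ one has $\liminf_m\phi_m(x_m)\ge\phi(x)$ (from $\limsup_m\epi(\phi_m)\subset\epi(\phi)$, since any cluster point of the sequence $(x_m,\phi_m(x_m))$ must lie in $\epi(\phi)$); and (ii) for every $x$ there is a recovery sequence $x_m\to x$ with $\limsup_m\phi_m(x_m)\le\phi(x)$ (from $\epi(\phi)\subset\liminf_m\epi(\phi_m)$ applied to the point $(x,\phi(x))$). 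Fix $x_0\in\intt\dom(\phi)$, so that $\phi(x_0)$ is finite. The lower bound is then immediate: applying (i) to the constant sequence $x_m\equiv x_0$ gives $\liminf_m\phi_m(x_0)\ge\phi(x_0)$.

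The substance is the matching upper bound $\limsup_m\phi_m(x_0)\le\phi(x_0)$; here inequality (ii) only furnishes control along an approximating sequence, and the work is to transfer that control to the fixed point. First I would establish a local uniform upper bound. Choose a nondegenerate simplex with vertices $y_0,\dots,y_n\in\intt\dom(\phi)$ whose interior contains a closed ball $\overline{B}(x_0,2r)$; this is possible precisely because $x_0$ is an interior point of $\dom(\phi)$. Applying the recovery property (ii) at each vertex produces $y_i^{(m)}\to y_i$ with $\limsup_m\phi_m(y_i^{(m)})\le\phi(y_i)$, so there is a constant $M$ with $\phi_m(y_i^{(m)})\le M$ for all $i$ and all large $m$. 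For such $m$ the perturbed simplex still contains $\overline{B}(x_0,2r)$, and writing any $w$ in this ball as a convex combination of the $y_i^{(m)}$ and invoking convexity of $\phi_m$ yields $\phi_m(w)\le M$. Thus $\phi_m\le M$ on $\overline{B}(x_0,2r)$ for all large $m$.

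With the local bound in hand, I would take a recovery sequence $x_m\to x_0$ for $x_0$ itself and exploit convexity to push the estimate onto $x_0$. Setting $z_m=x_0+s_m(x_0-x_m)$ with $s_m\to\infty$ chosen slowly enough that $s_m|x_0-x_m|\to0$ (so that $z_m\to x_0$ and $z_m\in\overline{B}(x_0,2r)$ eventually), the identity $x_0=\tfrac{s_m}{1+s_m}x_m+\tfrac1{1+s_m}z_m$ together with convexity gives
\[
\phi_m(x_0)\le\frac{s_m}{1+s_m}\,\phi_m(x_m)+\frac1{1+s_m}\,\phi_m(z_m)\le\frac{s_m}{1+s_m}\,\phi_m(x_m)+\frac{M}{1+s_m}.
\]
Since the values $\phi_m(x_m)$ remain bounded (squeezed between the lower bound coming from (i) and the upper bound $M$), and the weights tend to $1$ and $0$ respectively, passing to the limit yields $\limsup_m\phi_m(x_0)\le\limsup_m\phi_m(x_m)\le\phi(x_0)$. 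Combining with the lower bound gives $\phi_m(x_0)\to\phi(x_0)$, as claimed.

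The main obstacle is exactly this last transfer step: epi-convergence is genuinely weaker than pointwise convergence in general, and it is only the interplay of convexity with the interior hypothesis --- used both to surround $x_0$ by a simplex of finite-valued vertices and to keep $z_m$ inside the region of uniform boundedness --- that upgrades the one-sided recovery estimate to a two-sided pointwise limit. I note finally that, granting that $\dom(\phi)$ has nonempty interior, the statement is also an immediate consequence of Lemma~\ref{ptwconvergencedenseset}(3), since any point of $\intt\dom(\phi)$ lies in a compact set containing no boundary point of $\dom(\phi)$, on which the convergence is in fact uniform.
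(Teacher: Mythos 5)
The paper does not actually prove this lemma; it is quoted from \cite{robert1974} (and, in the stronger form of locally uniform convergence, it also appears as part (3) of Lemma~\ref{ptwconvergencedenseset}, likewise cited without proof). Your argument is a correct, self-contained proof via the standard epi-convergence mechanism: the two inclusions defining $\tau$-convergence of epigraphs yield the liminf inequality along arbitrary sequences and the existence of recovery sequences; the lower bound at $x_0$ is then immediate, and the upper bound is obtained by first using recovery sequences at the vertices of a simplex around $x_0$, together with convexity, to get a uniform bound $M$ on a ball $\overline{B}(x_0,2r)$, and then reflecting the recovery sequence for $x_0$ through $x_0$ into that ball to squeeze $\phi_m(x_0)$ between $\phi_m(x_m)$ and $M$. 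The details check out: the identity $x_0=\tfrac{s_m}{1+s_m}x_m+\tfrac{1}{1+s_m}z_m$ is correct, the choice $s_m\to\infty$ with $s_m|x_0-x_m|\to0$ keeps $z_m$ in the ball, and the two-sided boundedness of $\phi_m(x_m)$ (from the liminf inequality below and the recovery property above) justifies passing to the limit. The one step you assert without justification --- that the perturbed simplex still contains $\overline{B}(x_0,2r)$ for large $m$ --- is a standard stability fact for nondegenerate simplices and is harmless. Compared with simply invoking Lemma~\ref{ptwconvergencedenseset}(3), as you note at the end, your direct argument has the advantage of not resting on another unproved citation, and it isolates exactly where convexity and the interiority of $x_0$ are used to upgrade epi-convergence to pointwise convergence.
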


Next proposition is well known. Here it readily follows from Lemma \ref{lemmafromAKM} and Lemma \ref{ptwconvergencedenseset} . 
\begin{proposition}[\cite{salinettiwets1977}]\label{contioflegendre}
	\( \phi_m \xrightarrow{\tau}\phi\) is equivalent to \( \Lc \phi_m \xrightarrow{\tau}  \Lc\phi\).
\end{proposition}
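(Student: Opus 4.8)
The plan is to reduce epigraphical ($\tau$-) convergence to ordinary pointwise convergence, where the continuity of the Legendre transform is already packaged in Lemma~\ref{lemmafromAKM}(3), and then to reverse that reduction. Since $\Lc$ is an involution on $\cvx(\R^n)$, it suffices to prove one implication: once we know that $\phi_m\xrightarrow{\tau}\phi$ forces $\Lc\phi_m\xrightarrow{\tau}\Lc\phi$, applying this to the $\tau$-convergent sequence $\Lc\phi_m$ and using $\Lc\Lc=\mathrm{id}$ yields the converse at no extra cost.

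For the forward implication, first I would assume $\dom(\phi)$ has nonempty interior, which is the hypothesis under which Lemma~\ref{ptwconvergencedenseset} applies. From $\phi_m\xrightarrow{\tau}\phi$, the implication (1)$\Rightarrow$(2) of that lemma supplies a set dense in $\R^n$ on which $\phi_m\to\phi$ (pointwise, allowing the value $+\infty$). Passing through the exponential, the log-concave functions $f_m=e^{-\phi_m}$ converge to $f=e^{-\phi}$ pointwise on this dense set, so the hypothesis of Lemma~\ref{lemmafromAKM} is met. Invoking part~(3), the functional Legendre transforms, which satisfy $\mathcal{L}(e^{-\psi})=e^{-\Lc\psi}$, converge locally uniformly: $e^{-\Lc\phi_m}\to e^{-\Lc\phi}$ on $\intt\supp\mathcal{L}(f)=\intt\dom(\Lc\phi)$. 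Taking logarithms on this open set, where the limit is finite and positive, gives $\Lc\phi_m\to\Lc\phi$ locally uniformly, hence pointwise, on $\intt\dom(\Lc\phi)$. Feeding this back into the equivalence (2)$\Rightarrow$(1) of Lemma~\ref{ptwconvergencedenseset} for the sequence $\Lc\phi_m$ and candidate limit $\Lc\phi$ then produces $\Lc\phi_m\xrightarrow{\tau}\Lc\phi$.

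The step I expect to be the main obstacle is this last one: Lemma~\ref{ptwconvergencedenseset}(2) asks for pointwise convergence on a set dense in all of $\R^n$, whereas Lemma~\ref{lemmafromAKM}(3) only delivers it on $\intt\dom(\Lc\phi)$. I must therefore also control $\Lc\phi_m$ at points $y$ in the boundary or the exterior of $\dom(\Lc\phi)$, where $\Lc\phi(y)=+\infty$. For an exterior $y$ this should follow by extracting, from $\Lc\phi(y)=\sup_x\bigl(\langle x,y\rangle-\phi(x)\bigr)=+\infty$, a maximizing sequence $x_k\in\intt\dom(\phi)$ along which $\langle x_k,y\rangle-\phi_m(x_k)\to+\infty$, using $\phi_m\to\phi$ there. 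Alternatively, I can sidestep the exterior entirely by a subsequence argument: since the $\tau$-topology is metrizable and the space of closed sets is compact, every subsequence of $\{\Lc\phi_m\}$ has a further subsequence $\tau$-converging to a convex function $\psi$ (Lemma~\ref{continuityofsetlimit}); by Lemma~\ref{robert1974:ptwint} this $\psi$ agrees with $\Lc\phi$ on $\intt\dom(\Lc\phi)$ and, being closed and convex, must equal $\Lc\phi$; as every subsequence has a further subsequence with the same limit, the whole sequence $\tau$-converges to $\Lc\phi$.

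Finally, I would dispose of the degenerate cases in which $\dom(\phi)$, or for the reverse reading $\dom(\Lc\phi)$, fails to have nonempty interior, so that Lemma~\ref{ptwconvergencedenseset} is unavailable. In these low-dimensional situations I would instead verify the defining inclusions $\limsup\epi(\Lc\phi_m)\subset\epi(\Lc\phi)\subset\liminf\epi(\Lc\phi_m)$ directly, using that a lower-dimensional $\dom(\phi)$ corresponds to $\Lc\phi$ being affine along a complementary subspace, a structure stable enough under $\tau$-limits to be checked by hand.
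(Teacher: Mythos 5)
Your argument is correct and follows the same route the paper itself indicates: the paper offers no written proof beyond the remark that the proposition ``readily follows'' from Lemma \ref{lemmafromAKM} and Lemma \ref{ptwconvergencedenseset}, which is exactly the reduction you carry out (involution to get one direction for free, dense-set pointwise convergence, Lemma \ref{lemmafromAKM}(3), then back through Lemma \ref{ptwconvergencedenseset}). You in fact go further than the paper by correctly flagging the gap in ``readily follows'' --- namely that pointwise convergence of $\Lc\phi_m$ is only delivered on $\intt\dom(\Lc\phi)$ rather than on a set dense in $\R^n$ --- and your first patch (forcing $\Lc\phi_m(y)\to+\infty$ at points outside $\overline{\dom(\Lc\phi)}$ via a maximizing sequence in $\intt\dom(\phi)$) closes it, since the union of $\intt\dom(\Lc\phi)$ with the exterior of $\dom(\Lc\phi)$ is dense.
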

\par

Next lemma addresses the continuity of polarity operation on closed (not necessarily bounded) convex sets when the latter is equpped with \(\tau\)-topology, it was shown in \cite{mosco1971} (see also \cite{wijsman1966}).For completeness, we provide a proof.
 \begin{lemma}[\cite{mosco1971}]\label{convpolarset}
 	Let \( \{K_m\}_{m=1}^\infty, K\) are closed convex subsets in \( \R^n\) satisfying \( K_m\xrightarrow{\tau} K\). Then \( K_m^\circ\xrightarrow{\tau} K^\circ\).
 \end{lemma}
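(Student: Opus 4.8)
\emph{Plan.} The strategy is to convert the polarity of sets into the Legendre transform of their convex indicators, and then recover the convergence of the polars as a convergence of $1$-sublevel sets, using the continuity results already collected. The key observation is the identity
\[
 K^\circ = \{ y\in\R^n : \langle x,y\rangle \le 1 \text{ for all } x\in K \} = \{ y : h_K(y)\le 1 \} = G_{h_K}(1),
\]
where $h_K(y)=\sup_{x\in K}\langle x,y\rangle$ is the support function of $K$. Thus $K^\circ$ is \emph{exactly} a level set of $h_K$, and moreover $h_K=\Lc\,\Iinfty{K}$, since $\Lc\,\Iinfty{K}(y)=\sup_x\{\langle x,y\rangle-\Iinfty{K}(x)\}=\sup_{x\in K}\langle x,y\rangle$.

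First I would pass from sets to functions. By the observation recorded immediately after the definition of $\tau$-convergence of functions, the hypothesis $K_m\xrightarrow{\tau}K$ is equivalent to $\Iinfty{K_m}\xrightarrow{\tau}\Iinfty{K}$. Applying the continuity of the Legendre transform (Proposition \ref{contioflegendre}) yields $\Lc\,\Iinfty{K_m}\xrightarrow{\tau}\Lc\,\Iinfty{K}$, that is, $h_{K_m}\xrightarrow{\tau}h_K$. Next I would invoke the convergence of level sets (Lemma \ref{convlevelset}): since $h_{K_m}\xrightarrow{\tau}h_K$ and these are closed convex functions, the sublevel sets satisfy $G_{h_{K_m}}(t)\to G_{h_K}(t)$ for every $t\neq \inf h_K$. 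Taking $t=1$ and using the identity above, $G_{h_{K_m}}(1)=K_m^\circ$ and $G_{h_K}(1)=K^\circ$, which gives $K_m^\circ\xrightarrow{\tau}K^\circ$.

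What remains is a bookkeeping check that the level $t=1$ is admissible and that the objects live in $\cvx(\R^n)$. Since $h_K(0)=\sup_{x\in K}0=0$ whenever $K\neq\emptyset$, we have $\inf h_K\le 0<1$, so $1\neq\inf h_K$ and Lemma \ref{convlevelset} applies at $t=1$. For nonempty $K$ the support function $h_K$ is a closed proper convex function ($h_K(0)=0$ is finite and $h_K\ge\langle x_0,\cdot\rangle>-\infty$ for any $x_0\in K$); and since $K\subset\liminf_m K_m$ forces $K_m\neq\emptyset$ for all large $m$, the same holds for $h_{K_m}$ eventually, the finitely many degenerate indices not affecting the set limits. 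The trivial case $K=\emptyset$, where $K^\circ=\R^n$, is disposed of separately.

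I expect the only genuinely delicate point to be the inclusion $K^\circ\subset\liminf_m K_m^\circ$. In a direct Painlev\'e--Kuratowski argument this is obstructed by the possible unboundedness of the $K_m$: points of $K_m$ escaping to infinity are not controlled by $\limsup_m K_m\subset K$, so a given $y\in K^\circ$ need not lie in $K_m^\circ$ and must be perturbed in a way one has to justify. Routing the argument through the Legendre transform and Lemma \ref{convlevelset} is precisely what absorbs this difficulty, since the level-set lemma already encodes the correct behaviour at infinity. By contrast, the reverse inclusion $\limsup_m K_m^\circ\subset K^\circ$ is elementary and could be checked by hand: if $y_{m_j}\in K_{m_j}^\circ$ with $y_{m_j}\to y$ and $x\in K$, pick $x_m\in K_m$ with $x_m\to x$ (using $K\subset\liminf_m K_m$) and pass to the limit in $\langle x_{m_j},y_{m_j}\rangle\le 1$ to obtain $\langle x,y\rangle\le 1$.
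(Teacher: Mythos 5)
Your proposal follows exactly the paper's own route: pass from $K_m\xrightarrow{\tau}K$ to $\Iinfty{K_m}\xrightarrow{\tau}\Iinfty{K}$, apply Proposition \ref{contioflegendre} to get $h_{K_m}\xrightarrow{\tau}h_K$, and then use Lemma \ref{convlevelset} at the level $t=1$ (admissible since $\inf h_K\le 0<1$) together with the identification $G_{h_K}(1)=K^\circ$. The argument is correct, and your added bookkeeping on properness and the empty-set case only makes the paper's proof more explicit.
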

\begin{proof}
	With Proposition \ref{contioflegendre} and direct computation it is clear that 
	\begin{eqnarray*}
		K_m\xrightarrow{\tau} K \Leftrightarrow I_{K_n}^\infty \xrightarrow{\tau} I_K^\infty
		               \Leftrightarrow \Lc(I_{K_n}^\infty) \xrightarrow{\tau} \Lc(I_K^\infty)
		              \Leftrightarrow h_{K_m}\xrightarrow{\tau} h_K.
	\end{eqnarray*}
Since \( \inf_{x\in \R^n} h_K(x)<1\), by Lemma \ref{convlevelset}, 
\( G_{h_{K_m}}(1)\to G_{h_K}(1).\)
But \( G_{h_K}(1)= \{x: h_K(x)\le 1\}= \{ x: \|x\|_{K^\circ}\le 1\}= K^\circ\), we then have 
\( K_m^\circ \xrightarrow{\tau} K^\circ.\)
\end{proof}

Next lemma aims to compare sub-level sets of a convex function and its polar dual and Legendre dual. It can be found in \cite{RockafellarBook1970} (see also \cite{artsteinslomka2015} and \cite{FM2008}).
\begin{lemma}[\cite{RockafellarBook1970}]\label{comparisonoflevelsets}
	For \( \phi\in \cvx_0, s>0\), 
	\begin{enumerate}
		\item \(\{ x: \Ac \phi (x)\le s^{-1}\}   =  s^{-1} \{x: \Lc \phi(x)\le s\}\);
		\item \( \{ x: \phi(x)\le s\}^\circ  \subset s^{-1}\{x: \Lc \phi(x)\le s\}  \subset 2\{ x: \phi(x)\le s\}^\circ.  \)
	\end{enumerate}

\end{lemma}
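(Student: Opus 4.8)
The plan is to prove both parts by directly unwinding the definitions of $\Ac$ and $\Lc$ together with the defining property of the polar of a set; nothing beyond convexity and the normalization $\phi(0)=0$ should be needed.

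For part (1), I would first rewrite the condition $\Ac\phi(y)\le s^{-1}$. For $y\neq 0$ in the domain $\{\phi^{-1}(0)\}^\circ$, the inequality $\sup_{\phi(x)>0}\frac{\langle x,y\rangle-1}{\phi(x)}\le s^{-1}$ becomes, after clearing the positive denominator, $\langle x,y\rangle-s^{-1}\phi(x)\le 1$ for every $x$ with $\phi(x)>0$; and for $x$ with $\phi(x)=0$ the membership $y\in\{\phi^{-1}(0)\}^\circ$ gives $\langle x,y\rangle\le 1$, so the same inequality holds there as well. Multiplying through by $s>0$ and substituting $z=sy$ turns this into $\langle x,z\rangle-\phi(x)\le s$ for all $x$, which is precisely $\Lc\phi(z)\le s$. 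Hence $\Ac\phi(y)\le s^{-1}\iff \Lc\phi(sy)\le s$, which is the claimed set identity after scaling by $s^{-1}$. The only point requiring care is the edge case $y\notin\{\phi^{-1}(0)\}^\circ$, where $\Ac\phi(y)=+\infty$: there I would exhibit $x_0$ with $\phi(x_0)=0$ and $\langle x_0,y\rangle>1$, so that $\Lc\phi(sy)\ge s\langle x_0,y\rangle>s$, whence both sides exclude such $y$ and the correspondence stays consistent (at $y=0$ both inequalities hold trivially, using $\Lc\phi(0)=-\inf\phi=0$).

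For part (2), write $G_\phi(s)=\{x:\phi(x)\le s\}$, a convex set containing the origin since $\phi(0)=0\le s$. For the first inclusion I would take $y\in G_\phi(s)^\circ$, so $\langle x,y\rangle\le 1$ whenever $\phi(x)\le s$, and show $\Lc\phi(sy)\le s$, i.e.\ $s\langle x,y\rangle-\phi(x)\le s$ for all $x$. If $\phi(x)\le s$ this is immediate from $\langle x,y\rangle\le 1$ and $\phi\ge 0$; if $\phi(x)>s$ I would apply the radial contraction $x'=\frac{s}{\phi(x)}x$, which by convexity and $\phi(0)=0$ satisfies $\phi(x')\le s$, hence $\langle x',y\rangle\le 1$, giving $s\langle x,y\rangle\le\phi(x)$ and the desired bound. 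For the second inclusion I would take $y$ with $\Lc\phi(sy)\le s$, so $s\langle x,y\rangle\le\phi(x)+s$ for all $x$; restricting to $x\in G_\phi(s)$ yields $s\langle x,y\rangle\le 2s$, hence $\langle x,y/2\rangle\le 1$, so $y/2\in G_\phi(s)^\circ$, i.e.\ $y\in 2G_\phi(s)^\circ$.

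I do not expect a serious obstacle: the content is exactly the two elementary equivalences above, and the only thing to watch is consistency on the boundary cases of the domain of $\Ac$ and at $y=0$. The factor $2$ in the upper inclusion is precisely what the crude estimate $\phi(x)+s\le 2s$ on the sub-level set produces, so it is sharp for this argument and should not be expected to improve without further hypotheses.
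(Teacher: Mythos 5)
Your proof is correct. The paper does not actually prove this lemma---it is quoted directly from \cite{RockafellarBook1970} (see also \cite{artsteinslomka2015}, \cite{FM2008})---and your argument of unwinding the definitions of $\Ac$, $\Lc$ and the polar set is exactly the standard one from those references: the scaling computation for part (1), the radial contraction $x'=\frac{s}{\phi(x)}x$ for the first inclusion of part (2), and the crude bound $\phi(x)+s\le 2s$ on the sub-level set for the factor $2$, with the edge cases $y=0$ and $y\notin\{\phi^{-1}(0)\}^\circ$ handled correctly.
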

Moreover, it follows immediately from  Lemma \ref{lemmafromAKM} and Lemma \ref{ptwconvergencedenseset} that  the centroid with respect to Lebesgue measure is continuous with respect to \( \tau\)-topology. 
\begin{proposition}\label{contiofcentroid}
	If \( \phi_m \xrightarrow{\tau}\phi\) and \( 0<\int e^{-\phi}<\infty\), then \( c(\phi_m) \to c(\phi)\). 
\end{proposition}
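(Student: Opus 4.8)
The plan is to reduce $\tau$-convergence to pointwise convergence on a dense set, feed this into the integral-convergence statements of Lemma \ref{lemmafromAKM}, and then pass to the quotient defining the centroid.

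First I would check that the hypothesis $\int e^{-\phi}>0$ forces $\dom(\phi)$ to have nonempty interior. Indeed, $\phi$ is convex, so $\dom(\phi)$ is a convex set; a convex set with empty interior is contained in an affine hyperplane and hence has Lebesgue measure zero, which would give $\int e^{-\phi}=0$. Thus $\intt\dom(\phi)\neq\emptyset$, and the equivalence in Lemma \ref{ptwconvergencedenseset} is available. Applying it to the assumption $\phi_m\xrightarrow{\tau}\phi$ produces a dense subset $A\subset\R^n$ with $\phi_m(x)\to\phi(x)$ for every $x\in A$; consequently $f_m:=e^{-\phi_m}\to e^{-\phi}=:f$ pointwise on the dense set $A$.

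Next I would invoke Lemma \ref{lemmafromAKM}, whose pointwise-on-a-dense-set hypothesis is now verified. Part (1) gives $\int f_m\to\int f$, and since $\int f<\infty$ by assumption, part (2) gives the vector convergence $\int x f_m\to\int x f$ (componentwise in $\R^n$). Because $\int f=\int e^{-\phi}>0$ and $\int f_m\to\int f$, the scalar denominators $\int f_m$ are eventually bounded away from $0$, so the elementary algebra of limits for a quotient of convergent sequences with nonvanishing limiting denominator yields
$$c(\phi_m)=\frac{\int x f_m}{\int f_m}\longrightarrow\frac{\int x f}{\int f}=c(\phi).$$

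I do not expect a genuine obstacle here: once the two cited lemmas are in hand, the statement is essentially bookkeeping. The only point that deserves an explicit word — and is easy to overlook — is the verification that $\intt\dom(\phi)\neq\emptyset$, since this is exactly what licenses the use of Lemma \ref{ptwconvergencedenseset}; it is here that the strict positivity $\int e^{-\phi}>0$, rather than mere finiteness, is actually used.
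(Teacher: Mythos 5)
Your proposal is correct and follows exactly the route the paper indicates: the paper states this proposition as an immediate consequence of Lemma \ref{lemmafromAKM} and Lemma \ref{ptwconvergencedenseset}, which is precisely your reduction to pointwise convergence on a dense set followed by parts (1) and (2) of Lemma \ref{lemmafromAKM} and the algebra of limits. Your explicit verification that $\intt\dom(\phi)\neq\emptyset$ (needed to invoke the equivalence in Lemma \ref{ptwconvergencedenseset}) is a detail the paper leaves implicit, and it is a worthwhile addition.
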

\par

\section{The space of Geometric Convex Functions and the Extremal of Polar Blaschke-Santal\'{o}  inequality}

\subsection{The  space of Geometric Convex Functions}
\par 
We consider the class of  closed proper convex functions on \(\R^n\). A convex function $\phi: \R^n \rightarrow\R \cup \{ \infty \}$ is proper if \( \phi(x)<\infty\) for at least one point and \(\phi(x)>-\infty\) for every \(x\); it is closed if \(\epi(\phi)\) is a closed set.  Note that for a proper convex function, closedness is the same as lower semi-continuity.  For basic properties of convex functions, we refer the reader to \cite{RockafellarBook1970}. We donote by \( \dom(\phi)\) the set \( \{ x\in \R^n: \phi(x)<\infty\}\). 

\par 
Let \( \sc(\R^n)\) be the set of lower semi-continuous functions, \( \cvx(\R^n)\)  the set of closed convex functions and \( \cvx_0(\R^n)\) the set of nonegative closed convex function vanishing at the origin. Clearly, \( \cvx_0(\R^n)\subset \cvx(\R^n)\subset \sc(\R^n)\).

In the following, we see that with the \(\tau\)-topology introduced in the previous chapter, \( \sc(\R^n)\) and \( \cvx_0(\R^n)\)  form compact topological spaces. 
\begin{proposition}[\cite{dolecki1983}]\label{dolecki1983}
   The topological space \( (\sc(\R^n), \tau)\) is compact. 
\end{proposition}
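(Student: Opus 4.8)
The plan is to realize $(\sc(\R^n),\tau)$ as a closed subspace of the $\tau$-compact space $\Fc$ of closed subsets of $\R^{n+1}$ through the epigraph embedding. First I would note that $\iota:\phi\mapsto\epi(\phi)$ is injective, since $\phi(x)=\inf\{\alpha:(x,\alpha)\in\epi(\phi)\}$ recovers $\phi$ from its epigraph, and that by the very definition of $\tau$-convergence of functions it is a homeomorphism of $\sc(\R^n)$ onto its image: the equivalence $\phi_m\xrightarrow{\tau}\phi\iff\epi(\phi_m)\xrightarrow{\tau}\epi(\phi)$ is exactly what the function topology is built from. Here I read $\sc(\R^n)$ as the extended-real-valued lower semi-continuous functions, so that the constant function $+\infty$, whose epigraph is $\emptyset$, is included; this is necessary because closed sets can escape to infinity in the $\tau$-topology, making $\emptyset$ a genuine limit. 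Since $\Fc$ is compact and metrizable (by \cite{dolecki1983} and \cite{RobertsVarbergBook1973}), compactness of $\sc(\R^n)$ will follow once I show that the image $\iota(\sc(\R^n))$ is sequentially closed in $\Fc$.

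Next I would characterize the image intrinsically: a closed set $E\subset\R^{n+1}$ equals $\epi(\phi)$ for some lower semi-continuous $\phi$ if and only if it is upward closed in the vertical direction, i.e. $(x,\alpha)\in E$ and $\beta\ge\alpha$ force $(x,\beta)\in E$. One direction is immediate; for the converse, given such an $E$ one sets $\phi(x)=\inf\{\alpha:(x,\alpha)\in E\}$ and checks, using closedness and upward-closedness of $E$, that $\epi(\phi)=E$, so that $\phi$ is lower semi-continuous. The task therefore reduces to showing that this vertical upward-closedness is preserved under $\tau$-limits of sets.

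This last point is the crux. Given a sequence with $\epi(\phi_m)\xrightarrow{\tau}E$, the definition forces $E=\liminf_m\epi(\phi_m)=\limsup_m\epi(\phi_m)$, since $\liminf\subset\limsup$ always holds. For $(x,\alpha)\in E$ and $\beta\ge\alpha$ I would use $E\subset\liminf_m\epi(\phi_m)$ to choose $(x_m,\alpha_m)\in\epi(\phi_m)$, defined for all large $m$, with $(x_m,\alpha_m)\to(x,\alpha)$; upward-closedness of each $\epi(\phi_m)$ then gives $(x_m,\alpha_m+(\beta-\alpha))\in\epi(\phi_m)$, and these points converge to $(x,\beta)$, whence $(x,\beta)\in\liminf_m\epi(\phi_m)\subset\limsup_m\epi(\phi_m)\subset E$. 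Thus $E$ is upward closed, hence an epigraph, so the image is sequentially closed; being a closed subspace of the compact metrizable space $\Fc$, the space $(\sc(\R^n),\tau)$ is compact.

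I expect the main obstacle to be conceptual rather than computational: one must be careful about the extended-real values, namely the role of $+\infty$ (the empty epigraph) and of $-\infty$ (full vertical lines), so that the image of $\iota$ is \emph{exactly} the family of closed, vertically upward-closed subsets of $\R^{n+1}$, and that this family is genuinely closed in $\Fc$. Once the bookkeeping of these limiting values is handled correctly, the stability argument above is short, and the reduction to the compactness of $\Fc$ does the rest.
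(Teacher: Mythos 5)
Your proof is correct, and it follows the route the paper implicitly relies on: the paper states this proposition without proof, citing \cite{dolecki1983}, but its own proof of Proposition \ref{cvx0compact} uses exactly the fact you establish, namely that the set of epigraphs $\{F\in\Fc:\ F=\epi(\phi),\ \phi\in\sc(\R^n)\}$ is closed in the compact metrizable hyperspace of closed subsets of $\R^{n+1}$, so that compactness of $(\sc(\R^n),\tau)$ follows from compactness of that hyperspace. Your care with the constant function $+\infty$ (empty epigraph) is the one genuinely necessary bookkeeping point, since epigraphs can escape to infinity under $\tau$-convergence and $\emptyset$ must belong to the image for it to be closed.
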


\begin{proposition}\label{cvx0compact}
	The topological subspace \( (\cvx_0(\R^n), \tau)\) is compact. 
\end{proposition}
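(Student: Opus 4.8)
The plan is to exhibit $\cvx_0(\R^n)$ as a \emph{closed} subset of the compact space $(\sc(\R^n),\tau)$ provided by Proposition \ref{dolecki1983}; since a closed subspace of a compact space is compact, this gives the claim. Because the $\tau$-topology is metrizable, compactness coincides with sequential compactness, so it is enough to verify that $\cvx_0(\R^n)$ is sequentially closed in $\sc(\R^n)$. Concretely, I would take a sequence $\{\phi_m\}\subset\cvx_0(\R^n)$ with $\phi_m\xrightarrow{\tau}\phi$ for some $\phi\in\sc(\R^n)$ (such a $\tau$-limit exists along a subsequence by compactness of $\sc(\R^n)$) and show that the limit $\phi$ again lies in $\cvx_0(\R^n)$, i.e. that $\phi$ is closed and convex, that $\phi\ge 0$, and that $\phi(0)=0$.

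The key observation is that the defining constraints of $\cvx_0(\R^n)$ translate into set-theoretic constraints on the epigraphs that are stable under Kuratowski set-limits. Recall that $\phi_m\xrightarrow{\tau}\phi$ means $\epi(\phi_m)\xrightarrow{\tau}\epi(\phi)$, and since one always has $\liminf_m\epi(\phi_m)\subset\limsup_m\epi(\phi_m)$, the defining inclusions $\limsup_m\epi(\phi_m)\subset\epi(\phi)\subset\liminf_m\epi(\phi_m)$ force
\[
\epi(\phi)=\liminf_{m\to\infty}\epi(\phi_m)=\limsup_{m\to\infty}\epi(\phi_m).
\]
Convexity of $\phi$ is then immediate from Lemma \ref{ptwconvergencedenseset} (alternatively from Lemma \ref{continuityofsetlimit} applied to the convex sets $\epi(\phi_m)$), and closedness is automatic because a Kuratowski set-limit of closed sets is closed. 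For nonnegativity, note $\phi_m\ge 0$ means $\epi(\phi_m)\subset \R^n\times[0,\infty)$; any point of $\limsup_m\epi(\phi_m)$ is a limit of points $(x_{m_j},t_{m_j})\in\epi(\phi_{m_j})$ with $t_{m_j}\ge 0$, hence has nonnegative last coordinate, so $\epi(\phi)\subset\R^n\times[0,\infty)$ and therefore $\phi\ge 0$.

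To get $\phi(0)=0$, I would use that $(0,0)\in\epi(\phi_m)$ for every $m$ since $\phi_m(0)=0$; the constant sequence $(0,0)$ then witnesses $(0,0)\in\liminf_m\epi(\phi_m)=\epi(\phi)$, so $\phi(0)\le 0$, and together with $\phi\ge 0$ this yields $\phi(0)=0$. This same fact shows $\phi$ is proper (it is finite at the origin and bounded below by $0$), so $\phi$ is a genuine element of $\cvx_0(\R^n)$. I expect no serious obstacle here: the only point demanding care is bookkeeping with the two semi-limits and checking that the stated $\tau$-convergence indeed collapses $\liminf$ and $\limsup$ to a common value, which is exactly what lets me preserve the inequality constraints $\phi\ge 0$ and the equality $\phi(0)=0$ simultaneously. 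Everything else is a direct appeal to the lemmae already collected in Section 2.
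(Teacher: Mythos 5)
Your proof is correct, and it rests on the same overall strategy as the paper's: exhibit $\cvx_0(\R^n)$ as a closed subset of the compact space $(\sc(\R^n),\tau)$ from Proposition \ref{dolecki1983}, and observe that exactly three constraints must survive the limit --- convexity, nonnegativity, and $(0,0)\in\epi(\phi)$. Where you differ is in how closedness is verified. The paper argues synthetically: it writes $\{\epi(\phi):\phi\in\cvx_0(\R^n)\}$ as an intersection of families that are closed by the subbase description (\ref{subbaseoftau}), namely $\bigcap_{G\in\Gc}\Fc^{G\times(-\infty,0)}$ for nonnegativity, $\bigcap_{a\ge 0}\Fc_{\{0\}\times[0,a]}$ for vanishing at the origin, together with the epigraphs of lower semi-continuous functions and the convex closed sets. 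You instead invoke metrizability to reduce to sequential closedness and check each constraint directly from the definition of the Kuratowski semi-limits, using that $\tau$-convergence forces $\liminf_m\epi(\phi_m)=\limsup_m\epi(\phi_m)=\epi(\phi)$; all of your individual steps (nonnegativity via $\limsup$, $\phi(0)\le 0$ via the constant sequence $(0,0)$ in the $\liminf$, convexity via Lemma \ref{continuityofsetlimit} or Lemma \ref{ptwconvergencedenseset}) are sound. The two arguments are logically equivalent here; the paper's subbase formulation has the advantage that it is recycled essentially verbatim in the later closedness proofs (Lemma \ref{compactnesss1} and Lemma \ref{cptcvxc0}), while your sequential version is more elementary and makes the role of each defining condition of $\cvx_0(\R^n)$ transparent.
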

\begin{proof}
For \((\cvx_0(\R^n), \tau)\subset (\sc(\R^n), \tau)\), we shall show that \[ \{ F\in \Fc: F=\epi(\phi) ,\phi\in \cvx_0(\R^n)\}\]
is closed in \( \tau\) topology.  Note that for any open set \(G\subset \R^n \), \( \Fc^{G\times (-\infty,0)}\) is closed in \( \tau\) topology, so is \( \Fc_{ \{0\}\times [0,a]}\) for any \( a\ge 0\). Now observe that 
\begin{eqnarray*}
	  & &\{ F: F=\epi(\phi) ,\phi\in \cvx_0(\R^n)\}=\left( \bigcap_{G\in \Gc} \Fc^{G\times (-\infty,0)}\right) \bigcap\left( \bigcap_{a\ge 0}\Fc_{ \{0\}\times [0,a]}\right)\\
	  & &\bigcap \{ F\in \Fc: F=\epi(\phi) ,\phi\in \sc(\R^n)\}\bigcap \{ \text{convex sets}\}.  
	\end{eqnarray*}
The collection \(\{ F\in \Fc: F=\epi(\phi) ,\phi\in \sc(\R^n)\}\) is closed by Proposition \ref{dolecki1983} and  the collection of convex sets  is closed by Lemma \ref{continuityofsetlimit}. Hence 
the right hand side of the above equation is closed in \(\tau\) topology for it is an intersection of closed sets in \( \tau\) topology. Therefore the propostion follows. 
\end{proof}
We will use the classical John's theorem\cite{john1948} (see, e.g., \cite{GardnerBook2006}) which we recall now. 
\begin{theorem}[John's Theorem]\label{johnthm}
	For each central symmetric convex body \(K\in \R^n\), there is a unique ellipsoid \(E\) with the same center for which \( E\subset K\subset \sqrt{n}E\). For each convex body \(K\in \R^n\), there is a unique ellipsoid \(E\) with  center \(b\in K\) for which \( b+E\subset K\subset b+nE\). 
\end{theorem}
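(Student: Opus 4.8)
The plan is to realize the ellipsoid $E$ as the \emph{unique inscribed ellipsoid of maximal volume}, and then to read off the outer inclusions from the first-order (John) optimality conditions. First I would settle existence. Parametrize an ellipsoid as $E_{A,b}=b+A(B_2^n)$ with $A$ symmetric positive definite and $b\in\R^n$, so that $\vol(E_{A,b})=\omega_n\det A$, where $\omega_n=\vol(B_2^n)$. The constraint $E_{A,b}\subseteq K$ cuts out a closed set of parameters $(A,b)$, and since $K$ is bounded with nonempty interior the determinant $\det A$ stays bounded above and bounded away from $0$ along a maximizing sequence; a compactness argument then yields a maximizer. For uniqueness I would use that the admissible set is convex: if $b_1+A_1u$ and $b_2+A_2u$ lie in $K$ for all $|u|\le 1$, then $\tfrac12(b_1+b_2)+\tfrac12(A_1+A_2)u$ is a midpoint of two points of $K$, hence in $K$ by convexity. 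Combined with the strict concavity of $A\mapsto(\det A)^{1/n}$ on positive definite matrices, equality forces $A_1=A_2$ once the volumes are equal, and a short separate argument excludes distinct centers. In the centrally symmetric case, applying $x\mapsto -x$ to the maximal ellipsoid and invoking uniqueness shows it is centered at the origin. By the affine invariance of the entire statement I may then normalize so that the maximal ellipsoid is $E=B_2^n$.

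The heart of the matter, and the step I expect to be the main obstacle, is the contact structure: proving \emph{John's decomposition of the identity}. There should exist contact points $u_1,\dots,u_m\in\partial K\cap S^{n-1}$ and weights $c_i>0$ with $\sum_i c_i\,u_i\otimes u_i=I_n$, and in the non-symmetric case the additional barycenter condition $\sum_i c_i u_i=0$. This is precisely where maximality enters: if no such representation existed, a separation argument in the space of symmetric matrices (using Carath\'eodory to reduce to finitely many contact points) would produce an admissible volume-increasing perturbation of $B_2^n$, contradicting optimality. Each contact point $u_i$ sits on a supporting hyperplane of $K$ tangent to the sphere, so $\langle x,u_i\rangle\le 1$ for every $x\in K$; taking the trace of the decomposition gives $\sum_i c_i=n$.

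Finally I would extract the inclusions by elementary estimates. For $x\in K$ the decomposition gives $x=\sum_i c_i\langle x,u_i\rangle u_i$, hence $|x|^2=\sum_i c_i\langle x,u_i\rangle^2$. In the symmetric case $|\langle x,u_i\rangle|\le 1$, so $|x|^2\le\sum_i c_i=n$ and $K\subseteq\sqrt{n}\,B_2^n$. In the general case set $t_i=1-\langle x,u_i\rangle\ge 0$; the barycenter condition gives $\sum_i c_i t_i=n$, while $t_i\le 1+|x|$ by Cauchy--Schwarz. Then $|x|^2+n=\sum_i c_i t_i^2\le(1+|x|)\sum_i c_i t_i=(1+|x|)n$, whence $|x|\le n$ and $K\subseteq nB_2^n$. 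Undoing the affine normalization turns these into $E\subset K\subset\sqrt{n}E$ and $b+E\subset K\subset b+nE$ respectively. Only the derivation of the John conditions is genuinely delicate; everything else reduces to compactness, concavity of the determinant, and the quadratic estimates above.
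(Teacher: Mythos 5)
The paper does not prove this statement at all: it is quoted as the classical theorem of John, with references to \cite{john1948} and \cite{GardnerBook2006}, and is used later only as a normalization device (to place functions into the classes $\mathscr{S}_e$, $\mathscr{S}_1$, $\mathscr{S}_2$). So there is no in-paper argument to compare against; what you have written is the standard modern proof in the style of K.~Ball's exposition, and as a strategy it is correct. Your elementary computations all check out: the trace identity $\sum_i c_i=n$, the identity $|x|^2=\sum_i c_i\langle x,u_i\rangle^2$ giving $K\subseteq\sqrt{n}\,B_2^n$ in the symmetric case, and in the general case $\sum_i c_i t_i = n$ from the barycenter condition, $\sum_i c_i t_i^2=n+|x|^2$, and the bound $\sum_i c_i t_i^2\le(1+|x|)\sum_i c_i t_i$ yielding $|x|\le n$. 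The reduction of the symmetric case to an origin-centered ellipsoid via uniqueness and $x\mapsto-x$ is also fine. One caveat on the statement itself: as literally written, ``unique ellipsoid $E$ with $E\subset K\subset\sqrt{n}E$'' is false (for $K=B_2^n$ any slightly smaller concentric ball also works); the correct reading, which you implicitly adopt, is that the \emph{maximal volume} inscribed ellipsoid is unique and satisfies the stated inclusions.

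Two steps in your outline are deferred rather than proved, and they are exactly the nontrivial ones. First, the existence of the John decomposition $\sum_i c_i\,u_i\otimes u_i=I_n$ (with $\sum_i c_i u_i=0$ in the non-symmetric case): the separation/Carath\'eodory argument you allude to requires showing that if $I_n$ (respectively $(I_n,0)$) does not lie in the convex cone generated by the $u_i\otimes u_i$ (respectively $(u_i\otimes u_i,u_i)$), then the separating functional produces a one-parameter family of admissible ellipsoids of strictly larger volume; writing this out carefully, including why the perturbed ellipsoid avoids all of $\partial K$ for small parameter, is the real content. Second, uniqueness when $A_1=A_2=A$ but $b_1\neq b_2$: here the determinant argument gives nothing, and the standard fix is to observe that $K$ contains the convex hull of $b_1+A(B_2^n)$ and $b_2+A(B_2^n)$, which in turn contains an ellipsoid congruent to $A(B_2^n)$ dilated along the direction $A^{-1}(b_2-b_1)$, hence of strictly larger volume. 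You should either supply these two arguments or, as the paper does, simply cite the theorem; as it stands your text is a correct roadmap rather than a complete proof.
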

\par 
\noindent 
We are concerned with, in this section,  the behavior of the functional \( P_\Ac(\cdot)  \)  (cf.  (\ref{functionalLA})) on a  subset of  geometric convex functions.
We denote the set of non-degenerate geometric convex functions by 
\[  \cvx_{0}^+:= \left\{ \phi\in \cvx_0:  0< \int e^{-\phi} <\infty\right\},  \]
and the set of non-degenerate geometric convex functions with centroid 0 by 
\[  \cvx_{0,c}^+:= \left\{ \phi\in \cvx_0:  0< \int e^{-\phi} <\infty, c(\phi)=0 \right\}. \]
\par 
\noindent 

For heuristical reasons we treat firstly the case of even functions. The general case is more involved and will be treated thereafter. 
\par 
Let \(\Ss_e\) be  the set of functions 
\be
 \mathscr{S}_e: = \left\{ \phi\in \cvx_0: \phi(x)=\phi(-x), \|\cdot\|_2\wedge I_{B_2^n}^\infty  \le \phi \le     1+ I_{B_2^n/\sqrt{n}}^\infty  \right\}.     \label{defnsetevengcf}
 \ee
Equivalently, by the convexity, \( \phi\in \mathscr S_e\) if and only if \( \phi\in \cvx_0\) and \( B_2^n/\sqrt{n}\subset G_{\phi}(1)\subset B_2^n\). Clearly, 
\( \mathscr S_e\) is a subset of \( \cvx_0^+\), as any function in \( \mathscr S_e\) has finite positive volume, i.e., 
\[ \int e^{-\phi(x)}dx  \le \int e^{-\|\cdot\|\wedge I_{B_2^n}^\infty(x) }   dx \le \vol(B_2^n)(n\cdot n!+1)<\infty,\]
and 
\[ e^{-1} n^{-n/2}\vol(B_2^n)=\int e^{-1-I_{B_2^n/\sqrt{n}}^\infty(x)} dx \le \int e^{-\phi(x)}dx.  \]

\par 

By John's theorem \ref{johnthm}, any function in \( \cvx_0^+\) can be brought into \( \mathscr S\) by some \( T\in \GL(n)\). Indeed, given an even function \( \phi\in \cvx_0^+\), its sub-level set \( G_{\phi}(1)=\{ x: \phi(x)\le 1\}\) is an o-symmetric convex body. The John's theorem assures that  there is \( T\in \GL(n)\) such that \( B_2^n/\sqrt{n}\subset T(G_{\phi}(1))\subset B_2^n\). Thus \( T^{-1}\phi\in \mathscr{S}_e\). 

\par 

\begin{lemma}\label{cptcvx0even}
	\( \mathscr S_e\) is a closed subset of \( \cvx_0(\R^n)\) and hence it is compact. 
\end{lemma}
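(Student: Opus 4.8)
The plan is to derive compactness from closedness. By Proposition \ref{cvx0compact} the space $(\cvx_0(\R^n),\tau)$ is compact, and since the $\tau$-topology is metrizable, every $\tau$-closed subset is compact. So it suffices to prove that $\mathscr{S}_e$ is $\tau$-closed in $\cvx_0(\R^n)$.

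The key idea is to read every defining constraint at the level of epigraphs, where $\tau$-convergence is exactly Painlev\'e--Kuratowski convergence of closed sets; this bypasses the difficulty that pointwise inequalities need not survive a $\tau$-limit (convergence in $\tau$ forces pointwise convergence only on $\intt\dom\phi$, by Lemma \ref{robert1974:ptwint}). Write $g:=\|\cdot\|_2\wedge I_{B_2^n}^\infty$ and $h:=1+I_{B_2^n/\sqrt n}^\infty$ for the two bounding functions. Then $\epi(g)=\epi(\|\cdot\|_2)\cup\epi(I_{B_2^n}^\infty)$ is a union of two closed sets, hence closed, and $\epi(h)$ is closed as well. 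For $\phi\in\cvx_0(\R^n)$ one has the elementary equivalences
$$\phi\ge g \iff \epi(\phi)\subset\epi(g),\qquad \phi\le h \iff \epi(h)\subset\epi(\phi),$$
while $\phi$ is even iff $\epi(\phi)$ is invariant under the reflection $R(x,t)=(-x,t)$.

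I would then exhibit each constraint as a $\tau$-closed family of closed sets, in the style of the proof of Proposition \ref{cvx0compact}. Setting $U:=\R^{n+1}\setminus\epi(g)$, which is open, the inclusion $\epi(\phi)\subset\epi(g)$ is precisely $\epi(\phi)\cap U=\emptyset$, i.e. $\epi(\phi)\in\Fc^{U}$, and $\Fc^{U}$ is $\tau$-closed since its complement $\Fc_{U}$ is subbasic open. The inclusion $\epi(h)\subset\epi(\phi)$ reads $\epi(\phi)\in\bigcap_{p\in\epi(h)}\Fc_{\{p\}}$, an intersection of the $\tau$-closed sets $\Fc_{\{p\}}$ (closed because each singleton is compact, so $\Fc^{\{p\}}$ is subbasic open). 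The evenness condition is the fixed-point set of the involution on $\Fc$ induced by $R$, which is $\tau$-continuous as it permutes the hit-and-miss subbase, hence closed; equivalently, in the sequential picture, if $\phi_m\xrightarrow{\tau}\phi$ with each $\phi_m$ even then $\phi_m(-\,\cdot\,)\xrightarrow{\tau}\phi(-\,\cdot\,)$ while $\phi_m(-\,\cdot\,)=\phi_m\xrightarrow{\tau}\phi$, so uniqueness of limits gives $\phi=\phi(-\,\cdot\,)$. Since $\cvx_0(\R^n)$ is itself $\tau$-closed (Proposition \ref{cvx0compact}), intersecting these families displays
$$\mathscr{S}_e=\cvx_0(\R^n)\cap\{\,\epi(\phi):\epi(\phi)\text{ is }R\text{-invariant}\,\}\cap\Fc^{U}\cap\Bigl(\bigcap_{p\in\epi(h)}\Fc_{\{p\}}\Bigr)$$
as an intersection of $\tau$-closed sets, so $\mathscr{S}_e$ is $\tau$-closed and therefore compact.

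The only genuinely delicate point, which I expect to be the main obstacle, is the one flagged above: one cannot naively pass the pointwise bounds $g\le\phi_m\le h$ to the limit, because the $\tau$-limit controls epigraphs (closed sets) rather than pointwise values. Recasting the bounds as the closed-set inclusions $\epi(h)\subset\epi(\phi)\subset\epi(g)$ is what makes the argument transparent: under $\epi(\phi_m)\to\epi(\phi)$ a fixed inner set persists into the $\liminf$ (giving $\epi(h)\subset\epi(\phi)$, i.e. $\phi\le h$) and a fixed closed outer set captures the $\limsup$ (giving $\epi(\phi)\subset\epi(g)$, i.e. $\phi\ge g$). A secondary routine verification is simply that $\epi(g)$ and $\epi(h)$ are closed, which I have reduced to the observation $\epi(g)=\epi(\|\cdot\|_2)\cup\epi(I_{B_2^n}^\infty)$.
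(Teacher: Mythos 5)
Your argument is correct, but it follows a genuinely different route from the paper's proof of this lemma. The paper argues sequentially: given $\phi_m\xrightarrow{\tau}\phi$ with $\phi_m\in\mathscr S_e$, it first replaces the sandwich $\|\cdot\|_2\wedge I_{B_2^n}^\infty\le\phi\le 1+I_{B_2^n/\sqrt n}^\infty$ by the equivalent level-set condition $B_2^n/\sqrt n\subset G_\phi(1)\subset B_2^n$, passes this to the limit via the convergence of sublevel sets (Lemma \ref{convlevelset}), and then verifies evenness pointwise on $\intt\dom(\phi)$ using Lemma \ref{robert1974:ptwint} together with the fact that a closed convex function is determined by its values there. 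You instead keep the pointwise bounds, translate them into the epigraph inclusions $\epi(h)\subset\epi(\phi)\subset\epi(g)$, and exhibit each defining constraint --- including evenness, as the fixed-point set of the reflection homeomorphism of $(\Fc,\tau)$ --- as a closed family built from the hit-and-miss subbase (\ref{subbaseoftau}); this is precisely the style of the paper's own proofs of Proposition \ref{cvx0compact} and Lemma \ref{compactnesss1}, just not of this particular lemma. Your version is somewhat more self-contained: it needs neither Lemma \ref{convlevelset} nor the convexity-based equivalence between the pointwise sandwich and the level-set sandwich, and your uniqueness-of-limits argument for evenness sidesteps the issue of where $\phi_m\to\phi$ pointwise. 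The paper's version is shorter given the lemmas it cites. Both proofs hinge on the same key point you flag explicitly: the bounds must be read as set inclusions (of epigraphs or of level sets), with a fixed inner set surviving into the $\liminf$ and a fixed closed outer set absorbing the $\limsup$, since $\tau$-convergence does not directly control pointwise values.
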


\begin{proof}
	Suppose \( \{\phi_m\}_m\) is a sequence of function in \( \mathscr S_e\) such that \( \phi_m\xrightarrow{\tau} \phi\). The compactness of the space \( \cvx_0(\R^n)\) guarantees that \( \phi\in \cvx_0(\R^n)\).  By Lemma \ref{convlevelset}, \( G_{\phi_m}(1)\to G_\phi(1)\) in Hausdorff topology. It follows that \( B_2^n/\sqrt{n}\subset G_{\phi}(1)\subset B_2^n\). As for the eveness, since \( \phi\) is a closed convex function, it is enough to check on values in the interior of its domain. Hence for any \( x\in \intt(\dom(\phi))\), by Lemma \ref{robert1974:ptwint},  we have  \( \phi(x)= \lim_{m\to \infty} \phi_m(x) = \lim_{m\to \infty} \phi_m(-x)=\phi(-x)\).
\end{proof}

\par 
\noindent 
Next we consider the non-degenerate geometric convex functions, not necessarily even, but with centroid at the origin.
\par 
In this case we take a look at the following subset of functions:
\be 
\mathscr{S}_1:= \bigcup_{0\le t \le  n}\{ \phi\in \cvx_{0}:   te_1+B_2^n\subset G_\phi(1)\subset 2nB_2^n \}
\ee
 and  
\be 
\mathscr{S}_{1,c}:= \{ \phi\in \mathscr{S}_1:\int_{\R^n} x e^{-\phi(x)}dx=0 \}= \Ss_1\cap \cvx_{0,c}^+,
\ee
where \( e_1=(1,0,0,\cdots, 0)\).

\par 
Observe that any geometric convex function \( \phi\in \cvx_{0,c}\)  can be brought into \( \mathscr{S}_{1,c}\) by a linear transformation. Indeed, \( \phi\in \cvx_{0,c}\)  implies that \( G_\phi(1)\) is a convex body with \( 0\in \intt(G_\phi(1))\). By the John theorem  \ref{johnthm}, there are \( a\in \R^n, T\in \GL(n)\) such that 
\be 
a+TB_2^n\subset G_\phi(1)\subset a+ nTB_2^n. \label{nonsymjohn}
\ee
\(a\) is the center of the John ellipsoid of \(G_\phi(1)\). Applying \(T^{-1}\) yields 
\be
T^{-1}a+B_2^n\subset T^{-1}G_\phi(1)\subset T^{-1}a+ nB_2^n. \label{nonsymjohn1}
\ee
Appealing to  an appropriate rotation \( O\in O(n)\),  one has 
\be 
te_1+B_2^n \subset OT^{-1}G_\phi(1)\subset te_1+nB_2^n
\ee
where \( e_1=(1,0,...,0)\) and \( te_1= O T^{-1}a, t\ge 0\). Since \( 0\) is still in the interior of \( OT^{-1}G_\phi(1)\), \( t< n\). It follows that \( OT^{-1}G_\phi(1)\subset 2nB_2^n\). 
Since the centroid does not change by a linear transformation, \(TO^{-1}\phi\in \mathscr{S}_{1,c}\).
\par 
Now we firstly show that \( \mathscr S_1\) is  closed. 
\par 
\begin{lemma} \label{compactnesss1}
	\(\mathscr{S}_1\) is closed and hence compact.
\end{lemma}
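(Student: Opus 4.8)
The plan is to follow the template of Lemma \ref{cptcvx0even}: since the $\tau$-topology is metrizable, it suffices to show $\mathscr{S}_1$ is sequentially closed, after which compactness is immediate because a closed subset of the compact space $(\cvx_0(\R^n),\tau)$ (Proposition \ref{cvx0compact}) is compact. So I would take a sequence $\{\phi_m\}\subset \mathscr{S}_1$ with $\phi_m\xrightarrow{\tau}\phi$ and aim to prove $\phi\in\mathscr{S}_1$. Each $\phi_m$ comes equipped with a parameter $t_m\in[0,n]$ satisfying $t_m e_1+B_2^n\subset G_{\phi_m}(1)\subset 2nB_2^n$. First, compactness of $(\cvx_0(\R^n),\tau)$ gives $\phi\in\cvx_0(\R^n)$, so in particular $\phi\ge 0$ and $\phi(0)=0$, whence $\inf\phi=0\ne 1$. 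This is exactly what is needed to invoke Lemma \ref{convlevelset}, which then yields $G_{\phi_m}(1)\to G_\phi(1)$ in the $\tau$-topology; equivalently $\limsup_m G_{\phi_m}(1)=\liminf_m G_{\phi_m}(1)=G_\phi(1)$.

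The extra ingredient compared to the even case is the moving parameter $t_m$. Since $t_m$ lives in the compact interval $[0,n]$, I would pass to a subsequence along which $t_m\to t$ for some $t\in[0,n]$, and then pass both inclusions to the limit. The upper inclusion is the easy direction: any $x\in G_\phi(1)=\limsup_m G_{\phi_m}(1)$ is the limit of points $x_{m_j}\in G_{\phi_{m_j}}(1)\subset 2nB_2^n$, and $2nB_2^n$ is closed, so $x\in 2nB_2^n$. For the lower inclusion I would work on the open ball: if $|x-te_1|<1$, then since $t_m\to t$ along the subsequence we have $|x-t_m e_1|<1$ for all large $m$, so $x\in t_m e_1+B_2^n\subset G_{\phi_m}(1)$ for infinitely many $m$, giving $x\in\limsup_m G_{\phi_m}(1)=G_\phi(1)$. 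Hence $te_1+\intt(B_2^n)\subset G_\phi(1)$, and taking closures — legitimate because $G_\phi(1)$ is closed, $\phi$ being a closed convex function — produces $te_1+B_2^n\subset G_\phi(1)$.

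Combining the two inclusions gives $te_1+B_2^n\subset G_\phi(1)\subset 2nB_2^n$ with $t\in[0,n]$, i.e. $\phi\in\mathscr{S}_1$, establishing sequential closedness. I expect the only genuinely delicate point to be the bookkeeping around the moving center $t_m$: one must combine the compactness of $[0,n]$ with the correct semi-limit — note that for the lower inclusion the membership is only along a subsequence, so it is $\limsup_m$ that must be used, which is admissible precisely because the level sets actually converge and the two semi-limits coincide — together with the final closure step. Everything else is a routine transcription of the even-case argument via Lemma \ref{robert1974:ptwint} and Lemma \ref{convlevelset}, and compactness then follows as noted above.
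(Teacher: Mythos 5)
Your argument is correct, but it takes a genuinely different route from the paper. You reduce closedness to sequential closedness (legitimate, since the $\tau$-topology is metrizable, as the paper records in its remarks) and then pass the two John-type inclusions to the limit: Lemma \ref{convlevelset} converts $\phi_m\xrightarrow{\tau}\phi$ into $G_{\phi_m}(1)\to G_\phi(1)$ (applicable since $\inf\phi=0\neq 1$ for $\phi\in\cvx_0$), the outer inclusion survives because $2nB_2^n$ is closed, and the moving center is handled by extracting a convergent subsequence $t_{m_j}\to t$ in the compact interval $[0,n]$, proving the inner inclusion first on the open ball and then taking closures. Your care about which semi-limit is being used is exactly right, and in fact you could simplify it by noting that the subsequence still $\tau$-converges to $\phi$, so $x\in G_{\phi_{m_j}}(1)$ for all large $j$ already places $x$ in $\liminf_j G_{\phi_{m_j}}(1)=G_\phi(1)$. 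The paper instead works entirely in the hyperspace of closed sets and shows the complement $\Es_0\setminus\Es_1$ is open by exhibiting, around each excluded epigraph $F$, an explicit neighborhood built from the subbase \eqref{subbaseoftau}: a set of the form $\Fc_{(2nB_2^n)^c\times(0,1)}$ when the outer inclusion fails, and a set of the form $\Fc^{K}$ when no translate $te_1+B_2^n$ fits inside $F\cap\{x_{n+1}=1\}$ — the latter requiring a lower semicontinuous distance function $d_{F_1}(\cdot)$ and compactness of $[0,n]$ to extract a uniform gap $\eps_0>0$. So both proofs ultimately lean on compactness of $[0,n]$, but in different guises. Your sequential argument is shorter and mirrors the proof of Lemma \ref{cptcvx0even}; the paper's subbase construction is heavier but is then recycled almost verbatim in the proof of Lemma \ref{cptcvxc0}, where the relevant class $\Ss_2$ involves extra constraints ($\inf\phi=0$, $0\le\phi(0)\le n$) that are conveniently encoded as intersections of subbasic closed sets. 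Either proof is acceptable here.
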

\begin{proof}
	We shall prove here that the complement of \( \Ss_1\), \( \mathscr{S}_1^c\),  is open in \( (\cvx_0, \tau)\).
	Let 
	\be 
	\mathscr{E}_0=\{ F\in \Fc: F= \epi(\phi) \text{ for some } \phi\in \cvx_0\}.
	\ee
	\be 
	\mathscr{E}_1=\{ F\in \Fc: F= \epi(\phi) \text{ for some } \phi\in \mathscr{S}_1\}.
	\ee
	Elements in \( \mathscr{E}_0\) (resp. \( \mathscr{E}_1\)) is  one-to-one correspondent to elements in \( \cvx_0\) (resp.  \( \mathscr{S}_1\)).
	
	\par 
	Let \( F\in \mathscr{E}_0\setminus \mathscr{E}_1\).  Then either of the following two cases (or both) will happen. 
	
	Case 1. Let \( G=(2nB_2^n)^c\times (0,1)\)and  \( F\cap G\neq \emptyset\). Since \(G\) is an open set, \( \Fc_G\) is open in \( \tau\) topology.  Clearly,  \( \Fc_G\cap \mathscr{E}_1=\emptyset\). Thus \( \Fc_G\cap \intt(\Es_0) \) is an open neighborhood containing \(F\) and contained in \(\mathscr{E}_0\setminus \mathscr{E}_1\).
	
	Case 2. \(F\cap \{x_{n+1}=1\}\subset 2nB_2^n\) does not contain any    \( {\intt} ( te_1+ B_2^n)\), \( t\in [0,n]\).  We claim that there are \(\{x_t\in \R^n\}_{0\le t\le  n}\), \( \{0<a_t<1\}_{0\le t\le  n}\) and \( O_t\in \Gc, 0\le t\le n, G'\in \Gc \)(independent of \(t\)) such that 
		\be 
		x_t\in O_t,\ \ \  O_t\cap te_1+B_2^n\neq \emptyset,\ \ \ O_t\times (a_t,1]\subset (G')^c\subset F^c\label{ben}
		\ee
		for all \( 0\le t\le n\). 
		
		\par 
		To see this, we need a distance function defined for each closed set. For closed sets \( K, F\)   
		define 
		\be
		 d_F(K)= \max_{x\in K}d(x,F)=\max_{x\in K}\inf_{y\in F}\|x-y\|_2.\label{distancebtwsets}
		 \ee
		Now observe that \( d_F(K)\) is lower semi-continuous, i.e., as \( K_m\to K\), 
		\be
		 \liminf_{m\to \infty} d_F(K_m)\ge d_F(K).\label{lslofsets}
		\ee
		Indeed, if  \( K_m\to K\), for \( \forall x \in K\),
		by definition of set convergence  there is a sequence \( \{x_m\}_m\) such that \( x_m\in K_m\) and \( x_m\to x\). Thus 
		\begin{eqnarray*}
		  d(x,F)&=& \inf_{y\in F}\|x-y\|_2
		  \le \inf_{y\in F} \|x-x_m\|_2+\|x_m-y\|_2
		  = \|x-x_m\|_2+ d(x_m,F) 
		\end{eqnarray*} 
	    Taking \( \liminf\) with respect to \(m\) on both sides yields 
	    \[ d(x,F)\le \liminf_{m\to \infty}d(x_m, F).\]
	    Then by definition of \( d_F(\cdot)\) (\ref{distancebtwsets}), 
	    \[
	    d(x,F)\le \liminf_{m\to \infty}d(x_m, F)\le \liminf_{m\to \infty} d_F(K_m).
	    \]
		Taking maximum over \(x\in K\) on the left hand side gives (\ref{lslofsets}). 
		\par 
		In this  case, 
		\(F_1:=F\cap \{x_{n+1}=1\}\) not containing any   \( {\intt} ( te_1+ B_2^n), 0\le t\le n\) is equivalent to the statement 
		\be
		   d_{F_1}(te_1+ B_2^n)>0, 0\le t\le n.\label{unifpositivedistance}
		\ee
		Thus we have \( \inf_{0\le t\le n}d_{F_1}(te_1+ B_2^n)\ge 0\).
		Now we show that \(\inf_{0\le t\le n}d_{F_1}(te_1+ B_2^n)> 0 \). Suppose that \( \inf_{0\le t\le n}d_{F_1}(te_1+ B_2^n)= 0\). Then there exists an infinite sequence \( \{t_m\}_m\) such that 
		\[ \lim_{m\to \infty}d_{F_1}(t_me_1+ B_2^n)= 0.  \]
		Moreover, since \( \{ t_m\}\subset [0,n]\), there is a subsequence \( \{ t_{m_j}\}_j\) satisfying \( \lim_{j\to \infty } t_{m_j}=t_0\in [0,n]\) and 
		\be
		 \lim_{j\to \infty}d_{F_1}(t_{m_j}e_1+ B_2^n)= 0.
		 \label{lslimita} 
		\ee 
		On the other hand, since obviously \(t_{m_j}e_1+ B_2^n\to t_0e_1+ B_2^n \) and  (\ref{lslofsets}),
		\be 
		\liminf_{j\to \infty} d_{F_1}(t_{m_j}e_1+ B_2^n)\ge d_{F_1}(t_0e_1+ B_2^n)
		\label{lslimitb}
		\ee
		Combining (\ref{lslimita}), (\ref{lslimitb}) and non-negativity of the function \(d_{F_1}(\cdot)\), we have \[ d_{F_1}(t_0e_1+ B_2^n)=0,   \]
		which is a contradiction to (\ref{unifpositivedistance}). Therefore we have shown that \(\inf_{0\le t\le n}d_{F_1}(te_1+ B_2^n)> 0 \). 
		
		\par 
		Let \( \eps_0>0\) be such that \(\inf_{0\le t\le n}d_{F_1}(te_1+ B_2^n)>  \eps_0 \). 
		For each \(t\), let \( x_t\) be such that \( d(x_t,F_1)>\eps_0 \).
		Let \( O_t= {\intt} \left( B(x_t, \frac{\eps_0}{2})\right) \) a ball centered at \(x_t\) with radius \( \eps_0/2\).
		Last, let \( G'\) be an open set such that 
		\( F\subset G'\) and \( G_1=G\cap \{x_{n+1}=1\}\subset F_1+\frac{\eps_0}{2}B_2^n\). 
		It's straightforward to verify that these \( \{x_t\}_t, O_t, 0\le t\le n\) and \( G'\) satisfy conditions in (\ref{ben}).
		
		\par 
		
		Now \( \bigcup_{0\le t\le n} O_t\times (a_t,1]\) is open, \( (G')^c\) is closed and 
		\[
		\cl \left(\bigcup_{0\le t\le n} O_t\times (a_t,1]\right)\subset (G')^c\subset F^c.
		\]
		Therefore, 
		\[
		\cl \left(\bigcup_{0\le t\le n} O_t\times (a_t,1]\right)\bigcap  F=\emptyset.
		\]
		In other words, \[ F\in \Fc^{	\cl \left(\bigcup_{0\le t\le n} O_t\times (a_t,1]\right)}.\]
		It's also clear that 
		\[ \Fc^{	\cl \left(\bigcup_{0\le t\le n} O_t\times (a_t,1]\right)}\bigcap \mathscr{E}_1 =\emptyset. \]

Now one sees that there is an open neighborhood of \(F\)
\[    \left(  \Fc_{(2nB_2^n)^c\times (0,1)}\bigcup \Fc^{	\cl \left(\bigcup_{0\le t\le n} O_t\times (a_t,1]\right)}\right) \bigcap  \Es_0\]
such that 
\[ F\in \left(  \Fc_{(2nB_2^n)^c\times (0,1)}\bigcup \Fc^{	\cl \left(\bigcup_{0\le t\le n} O_t\times (a_t,1]\right)}\right) \bigcap  \Es_0\subset   \mathscr{E}_0\setminus \mathscr{E}_1.  \]
We conclude that \( \Es_0\setminus \Es_1\) is open and hence  \( \mathscr{E}_1\) is closed and so is \( \mathscr{S}_1\).

\end{proof}
\par 

\begin{lemma}\label{cptcvx0c0}
	\(\mathscr{S}_{1,c}\) is closed and hence compact. 
\end{lemma}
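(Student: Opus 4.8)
The plan is to exhibit $\mathscr{S}_{1,c}$ as the intersection of the already-compact set $\mathscr{S}_1$ with the zero-centroid locus, and then to deduce closedness from the $\tau$-continuity of the centroid. Concretely, since $\mathscr{S}_{1,c} = \Ss_1 \cap \cvx_{0,c}^+ = \{\phi \in \mathscr{S}_1 : c(\phi) = 0\}$, it suffices to show that the zero-centroid condition is preserved under $\tau$-limits of sequences drawn from $\mathscr{S}_{1,c}$.

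First I would take an arbitrary sequence $\{\phi_m\}_m \subset \mathscr{S}_{1,c}$ with $\phi_m \xrightarrow{\tau} \phi$ and argue $\phi \in \mathscr{S}_{1,c}$. Since $\mathscr{S}_{1,c} \subset \mathscr{S}_1$ and $\mathscr{S}_1$ is closed by Lemma \ref{compactnesss1}, the limit already lies in $\mathscr{S}_1$; in particular there is $t \in [0,n]$ with $te_1 + B_2^n \subset G_\phi(1) \subset 2nB_2^n$. Before invoking the centroid, I would record that $0 < \int e^{-\phi} < \infty$: the inclusion $te_1 + B_2^n \subset G_\phi(1)$ gives $\phi \le 1$ on a translate of the unit ball, so $\int e^{-\phi} \ge e^{-1}\vol(B_2^n) > 0$, while $G_\phi(1) \subset 2nB_2^n$ forces the unit sublevel set to be bounded; since $\phi(0) = 0 < 1$, the recession cone is trivial and $\phi$ grows at least linearly (one gets $\phi(x) \ge \|x\|_2/(2n+1)$ for $\|x\|_2 \ge 2n+1$ directly from convexity and $\phi(0)=0$), whence $\int e^{-\phi} < \infty$. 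This is the same coercivity estimate already used to show $\mathscr{S}_e \subset \cvx_0^+$, and it is exactly the content of $\mathscr{S}_1 \subset \cvx_0^+$.

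With $0 < \int e^{-\phi} < \infty$ secured, Proposition \ref{contiofcentroid} applies to the $\tau$-convergent sequence $\phi_m \xrightarrow{\tau} \phi$ and yields $c(\phi) = \lim_m c(\phi_m) = 0$, that is, $\int_{\R^n} x\,e^{-\phi(x)}\,dx = 0$. Hence $\phi \in \mathscr{S}_{1,c}$, so $\mathscr{S}_{1,c}$ is closed; being a closed subset of the compact space $(\cvx_0(\R^n),\tau)$ (Proposition \ref{cvx0compact}), it is compact.

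The one point requiring care---and the reason the argument is not merely ``a closed condition on a compact set''---is that the centroid map is only continuous on functions of finite positive volume, so Proposition \ref{contiofcentroid} cannot be applied blindly; the uniform two-sided sublevel-set bounds built into the definition of $\mathscr{S}_1$ are precisely what guarantees that this hypothesis survives the passage to the $\tau$-limit.
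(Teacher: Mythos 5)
Your proof is correct and follows essentially the same route as the paper: pass to a $\tau$-convergent sequence, use Lemma \ref{compactnesss1} to place the limit in $\mathscr{S}_1$, and invoke Proposition \ref{contiofcentroid} to preserve the zero-centroid condition. Your extra verification that $0<\int e^{-\phi}<\infty$ holds for the limit (needed to apply Proposition \ref{contiofcentroid}) is a detail the paper leaves implicit, and it is a worthwhile addition rather than a departure.
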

\begin{proof}
	Suppse \( \phi_m\xrightarrow{\tau} \phi\) and \( \phi_m\in \mathscr{S}_{1,c}\) for all \(m\). We shall show that \( \phi\in \mathscr{S}_{1,c}\). In fact, 
	 Lemma \ref{compactnesss1} guarantees \( \phi\in \Ss_{1}\).  Proposition \ref{contiofcentroid} gives \( c(\phi)=0\). 
	
	
\end{proof}

\subsection{The \(\Ac\) transform and functional \( P_\Ac(\cdot) \) }

We first establish the continuity of  \( \Ac\).
\begin{proposition}\label{congoftau}
	Let \( \{ \phi_m\}\) be a sequence of functions in \( \cvx_0(\R^n)\). Then  \( \phi_m\xrightarrow{\tau} \phi\) if and only if \( \Ac[\phi_m]\xrightarrow{\tau} \Ac[\phi]\).
\end{proposition}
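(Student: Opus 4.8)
The plan is to reduce the continuity of the polarity transform $\Ac$ to the continuity of polarity on closed convex sets, which is Lemma~\ref{convpolarset}, by passing through epigraphs. The structural input I would use is the observation (from \cite{artsteinmilman2011}) that for $\phi\in\cvx_0(\R^n)$ the epigraph of $\Ac\phi$ is the reflection of the polar of $\epi(\phi)\subset\R^{n+1}$. Writing $R(x,t)=(x,-t)$ for the reflection of $\R^{n+1}$ in the hyperplane $\{t=0\}$, a short computation from the defining supremum shows that $(y,-\sigma)\in(\epi\phi)^\circ$ exactly when $\sigma\ge\Ac\phi(y)$ (the binding constraint comes from $t=\phi(x)$, and letting $t\to+\infty$ forces the last coordinate of any polar point to be $\le 0$). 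This gives the identity
\be
\epi(\Ac\phi)=R\big((\epi\phi)^\circ\big).\label{epipolar}
\ee
I would record this first, noting that it is meaningful precisely because $\phi\in\cvx_0$ forces $\epi(\phi)$ to be a closed convex subset of $\R^{n+1}$ containing the origin $(0,0)$, so that $(\epi\phi)^\circ$ is again a closed convex set containing the origin and $\Ac\phi\in\cvx_0$.

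With (\ref{epipolar}) in hand the forward implication is immediate. By the definition of $\tau$-convergence of functions, $\phi_m\xrightarrow{\tau}\phi$ means exactly $\epi(\phi_m)\xrightarrow{\tau}\epi(\phi)$ as closed convex sets in $\R^{n+1}$. Applying Lemma~\ref{convpolarset} to closed convex subsets of $\R^{n+1}$ then yields $(\epi\phi_m)^\circ\xrightarrow{\tau}(\epi\phi)^\circ$. Since $R$ is a linear isometry of $\R^{n+1}$, it is a homeomorphism and therefore commutes with the set-theoretic upper and lower limits defining $\tau$-convergence, i.e. $R(\limsup_m A_m)=\limsup_m R(A_m)$ and likewise for $\liminf$. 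Consequently $R\big((\epi\phi_m)^\circ\big)\xrightarrow{\tau}R\big((\epi\phi)^\circ\big)$, which by (\ref{epipolar}) is precisely $\epi(\Ac\phi_m)\xrightarrow{\tau}\epi(\Ac\phi)$, that is, $\Ac\phi_m\xrightarrow{\tau}\Ac\phi$.

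For the converse I would invoke the fact \cite{artsteinmilman2011} that $\Ac$ is an involution on $\cvx_0(\R^n)$, so that $\Ac\Ac=\mathrm{id}$. Applying the forward implication just proved to the sequence $\Ac\phi_m$ and the limit $\Ac\phi$, the hypothesis $\Ac\phi_m\xrightarrow{\tau}\Ac\phi$ gives $\Ac(\Ac\phi_m)\xrightarrow{\tau}\Ac(\Ac\phi)$, i.e. $\phi_m\xrightarrow{\tau}\phi$. Thus both directions of the equivalence follow from the single continuity statement together with involutivity, and no separate argument for the backward direction is needed.

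The genuine content lies in justifying (\ref{epipolar}) and in checking that the hypotheses of Lemma~\ref{convpolarset} are met, namely that every set entering the argument is closed, convex, and contains the origin; this is exactly where the restriction to $\cvx_0$ is essential, since without $\phi(0)=0$ and $\phi\ge0$ the polar of $\epi(\phi)$ need not be an epigraph. The explicit computation of $(\epi\phi)^\circ$ has to be done carefully along the vertical recession direction, because it is precisely the sign constraint on the last coordinate that produces the reflection $R$; I expect this bookkeeping, rather than any analytic difficulty, to be the main thing to get right, after which the continuity is a formal consequence of Lemma~\ref{convpolarset}.
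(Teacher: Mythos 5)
Your argument is correct and follows essentially the same route as the paper: both rely on the identity $\epi(\Ac\phi)=R\bigl((\epi\phi)^\circ\bigr)$ from \cite{artsteinmilman2011}, reduce the statement to the continuity of polarity on closed convex sets (Lemma~\ref{convpolarset}) via the fact that the reflection is a homeomorphism commuting with set limits, and dispatch the remaining direction by the involutivity of $\Ac$. The only cosmetic difference is that you propose to verify the epigraph--polar identity by direct computation, whereas the paper simply cites it.
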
 

\begin{proof}
	Since the transform \( \Ac\) is an involution on \( \cvx_0\) \cite{artsteinmilman2011}, it suffices to  show  one direction of the assertion, say, the ``if" part. 
	\begin{eqnarray*}
		\Ac[\phi_m]\xrightarrow{\tau} \Ac[\phi] & \Leftrightarrow &  \epi(\Ac[\phi_m])\rightarrow \epi(\Ac[\phi])
		\Leftrightarrow  
		\Rc(\epi(\phi_m)^\circ) \rightarrow \Rc(\epi(\phi)^\circ)\\
		& \Leftrightarrow & 
		\epi(\phi_m)^\circ  \rightarrow \epi(\phi)^\circ
		\Leftrightarrow  
		\epi(\phi_m)  \rightarrow \epi(\phi) 
		\Leftrightarrow  
		\phi_m \xrightarrow{\tau} \phi 
	\end{eqnarray*}
	where \( \Rc\) denotes the reflection with respect to  \(\R^n\). 
	In the second equivalence  we use the geometric description of \( \Ac\) \cite{artsteinmilman2011}. The second equivalence from bottom is verified in the Lemma \ref{convpolarset}.
\end{proof}

\par 

\begin{proposition}
	Let \( \{ \phi_m\}\) be a sequence of functions in \( \cvx_0(\R^n)\). Then  \( \phi_m\xrightarrow{\tau} \phi\) iff \( \Jc[\phi_m]\xrightarrow{\tau} \Jc[\phi]\).
\end{proposition}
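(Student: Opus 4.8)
The plan is to deduce the statement directly from the two continuity results already established, by exploiting the factorization \( \Jc = \Lc \circ \Ac \) recorded in the introduction (following \cite{artsteinmilman2011}). Since \( \Ac \) maps \( \cvx_0(\R^n) \) into itself and \( \cvx_0(\R^n) \subset \cvx(\R^n) \), the composition \( \Lc \circ \Ac \) is well defined on \( \cvx_0(\R^n) \) and coincides with \( \Jc \) there. This reduces the claim to chaining the continuity of \( \Ac \) with the continuity of \( \Lc \).

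First I would prove the forward implication. Assume \( \phi_m \xrightarrow{\tau} \phi \) with all functions in \( \cvx_0(\R^n) \). By Proposition \ref{congoftau} (continuity of \( \Ac \)) this gives \( \Ac[\phi_m] \xrightarrow{\tau} \Ac[\phi] \), and these are again functions in \( \cvx_0(\R^n) \subset \cvx(\R^n) \). Applying Proposition \ref{contioflegendre} (continuity of the Legendre transform on \( \cvx(\R^n) \)) to this convergence yields \( \Lc\Ac[\phi_m] \xrightarrow{\tau} \Lc\Ac[\phi] \), that is, \( \Jc[\phi_m] \xrightarrow{\tau} \Jc[\phi] \).

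For the converse I would invoke the involutivity of \( \Jc \) on \( \cvx_0(\R^n) \) \cite{artsteinmilman2011}. Suppose \( \Jc[\phi_m] \xrightarrow{\tau} \Jc[\phi] \), where \( \Jc[\phi_m], \Jc[\phi] \in \cvx_0(\R^n) \). Applying the forward implication just proved to the sequence \( \{\Jc[\phi_m]\} \) gives \( \Jc\Jc[\phi_m] \xrightarrow{\tau} \Jc\Jc[\phi] \), which by \( \Jc \circ \Jc = \mathrm{id} \) is exactly \( \phi_m \xrightarrow{\tau} \phi \).

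The argument carries essentially no analytic obstacle, since the two genuinely hard continuity statements — for \( \Ac \) and for \( \Lc \) — have already been settled; the only thing to verify is the bookkeeping that the intermediate functions lie in the classes where each cited proposition applies, namely that \( \Ac \) preserves \( \cvx_0(\R^n) \) and that \( \cvx_0(\R^n) \) sits inside the domain \( \cvx(\R^n) \) of the Legendre continuity result. One could equally use the symmetric factorization \( \Jc = \Ac \circ \Lc \) and compose in the opposite order, but that would first require checking that \( \Lc \) sends the relevant functions back into \( \cvx_0(\R^n) \) before Proposition \ref{congoftau} may be applied, which is why the order \( \Lc \circ \Ac \) is the more convenient one to use here.
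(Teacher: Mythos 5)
Your argument is correct and is essentially the paper's own proof: the paper simply states that the claim ``follows straightforward from the fact that \( \Ac\circ\Lc = \Lc \circ \Ac = \Jc \)'', and you have filled in exactly the chaining of Proposition \ref{congoftau} with Proposition \ref{contioflegendre} that this one-line justification presupposes. The only cosmetic difference is that the converse could also be obtained by noting that both cited continuity statements are themselves equivalences, rather than by invoking the involutivity of \( \Jc \); either route is fine.
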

\begin{proof}
	It follows straightforward from the fact that \( \Ac\circ\Lc = \Lc \circ \Ac = \Jc\).
\end{proof}
Recall that the functional 
\(P_\Ac\) is defined on convex functions in the topological space \( ( \cvx_0^+, \tau)\): 
\begin{eqnarray*}
	P_\Ac:& &  ( \cvx_0^+, \tau)\rightarrow  \R\\
	& &   \phi   \rightarrow   P_\Ac(\phi)=\int e^{-\phi}dx\int e^{-\mathcal{A}\phi}dx.   
\end{eqnarray*}
It should be understood that \( P_\Ac(\cdot)\) is only well defined on such functions in \( ( \cvx_0^+, \tau)\) that \(0\) is contained in the interior of its domain, for example, functions with centroid at 0. For even functions, \( P_\Ac(\cdot)\) is well defined by Blaschke-Santal\'o inequality (\ref{polarsantalo}). More general, for \( \phi \in \cvx_0^+\), as long as \( 0\in \intt \dom(\phi)\) we have 
\par 
\noindent
\begin{proposition}\label{integralclosedpolar}
	If \( \int e^{-\phi} dx= \infty\), then \( \int e^{-\Ac\phi} dx=0\) provided \( 0\in \intt \dom(\phi)\). If  \( \int e^{-\phi} dx>0 \), then \( \int e^{-\Ac\phi} dx>0\). Hence  \(0<\int e^{-\phi} dx<\infty  \) implies \( 0<\int e^{-\Ac\phi} dx< \infty\) provided \( 0\in \intt \dom(\phi)\).
\end{proposition}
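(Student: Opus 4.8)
The plan is to rewrite both integrals through the distribution (layer-cake) formula and then transport everything to the sublevel sets of $\phi$ by means of Lemma~\ref{comparisonoflevelsets}. Since $\phi,\Ac\phi\ge 0$, Fubini gives $\int e^{-\phi}\,dx=\int_0^\infty e^{-s}\vol(G_\phi(s))\,ds$ and $\int e^{-\Ac\phi}\,dx=\int_0^\infty e^{-u}\vol(G_{\Ac\phi}(u))\,du$. Combining parts (1) and (2) of Lemma~\ref{comparisonoflevelsets} yields the two-sided inclusion $G_\phi(s)^\circ\subset G_{\Ac\phi}(1/s)\subset 2\,G_\phi(s)^\circ$ for every $s>0$, whence $\vol(G_\phi(s)^\circ)\le \vol(G_{\Ac\phi}(1/s))\le 2^n\vol(G_\phi(s)^\circ)$. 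After the substitution $u=1/s$ this reduces the whole statement to understanding, for each $s$, the Lebesgue measure of the polar body $G_\phi(s)^\circ$ -- whether it is zero, positive, or finite.

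First I would dispose of the positivity and finiteness assertions. Assuming $0\in\intt\dom(\phi)$, the function $\phi$ is finite and continuous near $0$ with $\phi(0)=0$, so $0\in\intt G_\phi(s)$ for every $s>0$ and each $G_\phi(s)^\circ$ is bounded. If moreover $\int e^{-\phi}\,dx<\infty$, then every $G_\phi(s)$ is bounded too: were some $G_\phi(s_0)$ unbounded it would contain a ray emanating near $0$, and the convex hull of that ray with a small ball about $0$ -- on which $\phi$ stays bounded -- has infinite volume, forcing $\int e^{-\phi}\,dx=\infty$. Thus each $G_\phi(s)$ is a genuine convex body, each $G_\phi(s)^\circ$ has positive finite volume, and the inclusion above immediately gives $\int e^{-\Ac\phi}\,dx>0$. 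For finiteness I would bound $\vol(G_\phi(s)^\circ)$: convexity and $\phi(0)=0$ force $\phi$ to grow at most linearly along rays near the origin, so $G_\phi(s)\supset c\,s\,B_2^n$ for small $s$ and hence $\vol(G_\phi(s)^\circ)\lesssim s^{-n}$, while $\vol(G_\phi(s)^\circ)$ is nonincreasing in $s$; feeding these into $\int_0^\infty e^{-u}\vol(G_{\Ac\phi}(u))\,du\le 2^n\int_0^\infty e^{-1/s}\vol(G_\phi(s)^\circ)\,s^{-2}\,ds$ and splitting at $s=1$ makes both pieces converge, since the factor $e^{-1/s}$ decays faster than any power as $s\to 0$. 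Alternatively, as $\Ac$ is an involution on $\cvx_0$, the upper bound $\int e^{-\Ac\phi}\,dx<\infty$ can be obtained by applying the first implication to $\Ac\phi$ together with $\int e^{-\phi}\,dx>0$, provided one first checks $0\in\intt\dom(\Ac\phi)$.

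The delicate point -- and the step I expect to be the main obstacle -- is the implication $\int e^{-\phi}\,dx=\infty\Rightarrow\int e^{-\Ac\phi}\,dx=0$. By the reduction above this is equivalent to $\vol(G_\phi(s)^\circ)=0$ for every $s>0$, i.e.\ to each sublevel set having a degenerate polar. Here one must be careful: $\vol(G_\phi(s)^\circ)=0$ holds precisely when $G_\phi(s)$ contains an entire line (its lineality space is nontrivial), and \emph{not} merely when it contains an unbounded recession ray. While $\int e^{-\phi}\,dx=\infty$ with $0\in\intt\dom(\phi)$ does force the sublevel sets to be unbounded, the inference ``unbounded $\Rightarrow$ null polar'' is false for a convex body whose recession cone is only a ray, so one genuinely has to upgrade unboundedness to the presence of a full line before the polar can be shown to be Lebesgue-null. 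I would therefore isolate and prove this geometric fact separately, analysing the recession cone of $\dom(\phi)$ and the directions in which $\phi$ remains bounded versus those in which it grows; this is the only place where the hypothesis $\int e^{-\phi}\,dx=\infty$ is truly exploited, and it is where the argument must be made with the greatest care.
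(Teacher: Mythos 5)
Your proof of the third assertion --- the only one the rest of the paper actually uses --- is complete and correct. The positivity half follows the same route as the paper (layer-cake formula plus Lemma \ref{comparisonoflevelsets}, reducing everything to $\vol(G_\phi(s)^\circ)$), but you supply a step the paper skips and genuinely needs: that $\int e^{-\phi}\,dx<\infty$ together with $0\in\intt\dom(\phi)$ forces each $G_\phi(s)$ to be a bounded body. Without that, the paper's inference ``$0\in \intt G_\phi(s)$, hence $0\in\intt (G_\phi(s))^\circ$'' is simply false (take $\phi(x_1,x_2)=|x_2|$: the sublevel sets are slabs and their polars are segments of measure zero, so $\int e^{-\phi}\,dx>0$ alone does not give $\int e^{-\Ac\phi}\,dx>0$). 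Your finiteness half is different from the paper's: the paper deduces $\int e^{-\Ac\phi}\,dx<\infty$ by applying the first assertion to $\Ac\phi$, whereas you prove it directly from $G_\phi(s)\supset c\,s\,B_2^n$ and $\vol(G_\phi(s)^\circ)\le C s^{-n}$. That is a real gain, because it makes the well-definedness of $P_\Ac$ on $\cvx_0^+$ independent of the first assertion.

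That independence matters, because the ``geometric fact'' you defer --- that $\int e^{-\phi}\,dx=\infty$ forces each $G_\phi(s)$ to contain a full line --- is not merely the main obstacle: it is false, and with it the first assertion of the proposition. Take $n=2$ and $\phi(x_1,x_2)=\max(x_1,0)+|x_2|$. Then $\phi\in\cvx_0$, $\dom(\phi)=\R^2$ and $\int e^{-\phi}\,dx=\infty$, yet $\Lc\phi=I^\infty_Q$ with $Q=[0,1]\times[-1,1]$, hence $\Ac\phi=\Jc\Lc\phi=\|\cdot\|_Q$ and $\int e^{-\Ac\phi}\,dx=2!\,\vol(Q)=4>0$. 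Each $G_\phi(s)$ has recession cone equal to the single ray $\{x_1\le 0,\ x_2=0\}$ --- exactly the configuration you warned about, where the polar of an unbounded set still has positive volume. The paper's own proof of the first assertion collapses at the unsupported step ``$\vol(K_i)\to\infty$ implies $\vol(K_i^\circ)\to 0$'', which fails for $K_i=G_\phi(1)\cap\{x_1\ge -i\}$ in this example ($\vol(K_i)\to\infty$ while $\vol(K_i^\circ)\to 2$). So you should not attempt to close this gap; the correct conclusion is that the proposition must be weakened to its last assertion (which your argument proves and which suffices for Theorem \ref{mainthm1}), and your diagnosis of where the difficulty lies is exactly right.
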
 
\par 
\noindent
\begin{proof} If \( \int e^{-\phi} dx= \infty\), then \(  G_\phi(1)\) is an unbounded set with \( 0\in \intt \dom(\phi)\).  If \( G_\phi(1)\) is bounded by a convex body \(K\), then 
\[ \phi\ge  \|\cdot\|_K\wedge I_K^\infty\]
where \(\|\cdot \|_K\wedge I_K^\infty(x)=\min\{ \|x\|_K, I_K^\infty(x)\}\). 
Hence one has 
\[   \int e^{-\phi} dx\le \int e^{-\|x\|_K}dx+\vol(K)<\infty\]
which leads to a contradiction to assumption that \( \int e^{-\phi} dx= \infty\).  Therefore, there exist \( \{K_i\}_{i=1}, K_i\subset G_\phi(1)\) with \( 0\in \intt \dom(K_i)\) for all \(i\), a volume increasing sequence of convex bodies \[ \vol(K_1)\le \vol( K_2)  \le  \cdots \]
such that \( \vol(K_i)\to \infty\) as \( i\to \infty\). 
Let \( f_i(x)= \|x\|_{K_i}\vee I_{K_i}^\infty=\max\{ \|x\|_{K_i}, \Iinfty{K_i}(x)\}, i=1,2,...\). By convexity, it is clear that \(  \phi\le f_i, \forall\ i .\) Hence one has 
\be
\Ac \phi(x) \ge   \Ac f_i(x)= \Ac\left( \|x\|_{K_i}\vee I_{K_i}^\infty \right)
=\Ac(\|x\|_{K_i})\wedge  \Ac(I_{K_i}^\infty)
= \|x\|_{K_i^\circ}\wedge I_{K_i^\circ}^\infty.
\ee
It follows that for all \(i\), 
\begin{eqnarray*} 
	\int e^{-\Ac\phi} dx   &\le & \int e^{-\|x\|_{K_i^\circ}\wedge I_{K_i^\circ}^\infty}dx
	\le \int e^{-\|x\|_{K_i^\circ}}dx+\vol(K_i^\circ)\\
	& = & \Gamma (n+1)\vol(K_i^\circ)+\vol(K_i^\circ)= (n!+1)\vol(K_i^\circ).
\end{eqnarray*}
Now notice that as \( i\to \infty\) and \( \vol(K_i)\to \infty\), \( \vol(K_i^\circ)\to 0\). Thus \( \int e^{-\Ac\phi} dx=0\). We have proved the first claim.  
\par 
If \( \int e^{-\phi}dx>0\), then the affine dimension of \( \overline{\dom(\phi)}\) is  \(n\) and so is every sub-level set \( G_\phi(s), s\neq 0\).    Hence \( \vol(G_\phi(s))>0\). Since \( 0\in \intt G_\phi(s) \), \( 0\in \intt (G_\phi(s))^\circ\) and \( \vol((G_\phi(s))^\circ)>0 \).  Then  by Lemma \ref{comparisonoflevelsets}
\begin{eqnarray*}
\int e^{-\Ac\phi} dx&=&  \int_0^1\vol(G_{\Ac\phi}(-\log s))ds\\
&\ge& \int_0^1   \vol(  (G_\phi(1/(-\log s)))^\circ) dx>0.
\end{eqnarray*} 
Therefore we conclude that   \(0<\int e^{-\phi} dx<\infty  \) implies \( 0<\int e^{-\Ac\phi} dx< \infty\) provided \( 0\in \intt \dom(\phi)\).
\end{proof}


\par 
\noindent 
\(\GL(n)\)-invariant of \( P_\Ac(\cdot)\) is well known. The proof is routine, we omit it. 
\begin{lemma} \label{PAinvariance}
	\( P_\Ac\)  is \( \GL(n)\)-invariant. 
\end{lemma}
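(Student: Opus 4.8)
The plan is to reduce the invariance to the way $\Ac$ intertwines with linear changes of variable and then to observe that the two Jacobian factors produced by the corresponding substitutions cancel exactly. Throughout I use the action $(T\phi)(x)=\phi(Tx)$ for $T\in\GL(n)$, which is consistent with the normalizations used above (so that $G_{T^{-1}\phi}(1)=T(G_\phi(1))$), and I write $T^{\top}$ for the transpose of $T$. The decisive structural fact is that $\int e^{-\phi}$ scales by $|\det T|^{-1}$ while $\int e^{-\Ac\phi}$ scales by $|\det T|$, so their product is unchanged.

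First I would establish the intertwining identity $\Ac(T\phi)(y)=(\Ac\phi)\big((T^{\top})^{-1}y\big)$ for every $y\in\R^n$. On the set where $\Ac(T\phi)$ is given by the supremum this is immediate from the defining formula: substituting $u=Tx$ in $\sup_{\{x:\phi(Tx)>0\}}\frac{\langle x,y\rangle-1}{\phi(Tx)}$ and using $\langle T^{-1}u,y\rangle=\langle u,(T^{\top})^{-1}y\rangle$ turns it into $\sup_{\{u:\phi(u)>0\}}\frac{\langle u,(T^{\top})^{-1}y\rangle-1}{\phi(u)}=(\Ac\phi)\big((T^{\top})^{-1}y\big)$. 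It then remains to match the two degenerate branches in the definition of $\Ac$. Since $(T\phi)^{-1}(0)=T^{-1}\big(\phi^{-1}(0)\big)$, the polarity rule $(T^{-1}K)^\circ=T^{\top}K^\circ$ gives $\{(T\phi)^{-1}(0)\}^\circ=T^{\top}\{\phi^{-1}(0)\}^\circ$, so $y\in\{(T\phi)^{-1}(0)\}^\circ$ exactly when $(T^{\top})^{-1}y\in\{\phi^{-1}(0)\}^\circ$, and $y=0$ exactly when $(T^{\top})^{-1}y=0$. Hence the values $+\infty$ and $0$ occur consistently under the correspondence $y\leftrightarrow(T^{\top})^{-1}y$, and the identity holds on all of $\R^n$.

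With the intertwining identity in hand, I would apply the change of variables formula to the two integrals defining $P_\Ac$. The substitution $u=Tx$ gives $\int e^{-T\phi}\,dx=|\det T|^{-1}\int e^{-\phi}\,du$, while the substitution $v=(T^{\top})^{-1}y$ gives $\int e^{-\Ac(T\phi)(y)}\,dy=\int e^{-(\Ac\phi)((T^{\top})^{-1}y)}\,dy=|\det T^{\top}|\int e^{-\Ac\phi}\,dv=|\det T|\int e^{-\Ac\phi}\,dv$. Multiplying the two, the factors $|\det T|^{-1}$ and $|\det T|$ cancel, yielding $P_\Ac(T\phi)=P_\Ac(\phi)$. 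The only genuinely delicate point is the verification of the intertwining identity at the two degenerate branches of $\Ac$, together with the polarity rule $(T^{-1}K)^\circ=T^{\top}K^\circ$ for the zero level set; everything else is routine bookkeeping. Alternatively one can bypass the explicit formula and argue geometrically: lift $T$ to $\tilde T=T\oplus 1$ on $\R^{n+1}$, use $\epi(T\phi)=\tilde T^{-1}(\epi\phi)$ together with the description $\epi(\Ac\phi)=\Rc\big((\epi\phi)^\circ\big)$ and the identity $(\tilde T^{-1}S)^\circ=\tilde T^{\top}S^\circ$ to recover the same intertwining, after which the Jacobian cancellation proceeds identically.
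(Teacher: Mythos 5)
Your proof is correct. The paper itself omits the proof of this lemma as ``routine,'' and your argument --- the intertwining identity $\Ac(T\phi)=(\Ac\phi)\circ(T^{\top})^{-1}$ followed by cancellation of the Jacobian factors $|\det T|^{-1}$ and $|\det T|$ --- is precisely the standard computation the authors had in mind, with the degenerate branches of $\Ac$ handled carefully.
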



\begin{proof}[Proof of Theorem \ref{mainthm1}]
	By \(\GL(n)\)-invariant of \( P_\Ac(\cdot)\), it suffices to prove that the functional \( P_\Ac(\cdot)\) achieves maximum and minimum on \( \Ss_e\), respectively, \( \Ss_{1,c}\). To this end,  we shall show that the functional \(P_\Ac(\cdot)\) is continuous on \( \mathscr S_e\), respectively, \( \Ss_{1,c}\). Suppose \( \{\phi_m\}_m\) is a sequence of function in \( \mathscr S_e\),  respectively, \( \Ss_{1,c}\)  such that \( \phi_m\xrightarrow{\tau} \phi\). Since \( \Ss_e\) and \(\Ss_{1,c}\) are closed, \(\phi\in \Ss_e\), respectively, \( \phi\in \Ss_{1,c}\).  By Proposition \ref{congoftau} we have \( \Ac \phi_m \xrightarrow{\tau} \Ac\phi\). By Lemma \ref{ptwconvergencedenseset}, \( \phi_m\to \phi\) pointwise on a dense set. It follows then from Lemma \ref{lemmafromAKM} that, as \(m\to \infty\), 
	\begin{eqnarray*}
		\int e^{-\phi_m}dx\raw \int e^{-\phi} dx 
	\end{eqnarray*}
	\par 
	Thus  we have 
	\[\int e^{-\phi_m}\raw \int e^{-\phi}  \text{  and   } \int e^{-\Ac \phi_m}\raw \int e^{-\Ac \phi},\]
	 as \(m\to \infty\) which in turn imply \( P_\Ac(\phi_m)\to P_\Ac(\phi)\). Now the assertions follow from the compactness of \( \mathscr{S}_e\) (Lemma \ref{cptcvx0even}), and  respectively, of  \( \Ss_{1,c}\) (Lemma \ref{cptcvx0c0}). 
\end{proof}
\par

\section{The Blaschke-Santal\'{o} Inequalities with Legendre Transform}

We  deal with convex functions with finite positive integral, i.e., 
\[ \cvx^+(\R^n)= \left\{      \phi\in \cvx(\R^n): 0<\int_{\R^n}e^{-\phi(x)}dx<\infty      \right \}  ,            \]
and the functional \(P_\Lc\) defined on functions from the space \( ( \cvx^+, \tau)\): 
\begin{eqnarray*}
    P_\Lc:& &  ( \cvx^+, \tau)\rightarrow  \R^n\\
       & &   \phi   \rightarrow   P_\Lc(\phi)=\int e^{-\phi}dx\int e^{-\mathcal{L}\phi}  dx, 
\end{eqnarray*}
where \(\Lc\) denotes the standard Legendre transform. 
One notes that for \( P_\Lc (\phi)\) to be well defined on \( \phi\), i.e., \( P_\Lc(\phi)<\infty\), we require \( 0\) is contained in the interior of \( \dom(\phi)\). For even convex functions and convex functions with centroid at 0, \( P_\Lc(\cdot)<\infty\) follows from the functional Santal\'o inequality \cite{AKM2004}\cite{FM2007}. In general, given \(\phi\) such that \( 0\in \intt(\dom(\phi))\),  one may deduce from the same idea of Lemma \ref{integralclosedpolar} that if \(0<\int_{\R^n}e^{-\phi(x)}dx<\infty \), then \( 0<\int_{\R^n}e^{-\Lc\phi(x)}dx<\infty \) (we omit proof for this claim).
To see the range of \( P_\Lc(\cdot)\) on the subset of functions in \( \cvx^+\) with centroid 0 we need also the linear invariant property of \( P_\Lc(\cdot)\). 
Define a transformation \(A=T\oplus t\) of \( \phi\in \cvx^+\), where \( T\in \GL(n), t\in \R\)  by
\[
A\phi(x)=T\phi(x)+t=\phi(Tx)+t.
\]  
As the \( \GL(n)\)-invariant for \( P_\Ac(\cdot)\), next lemma is elementary and  well known. We omit the proof.
\begin{lemma}
	\( P_\Lc\) is invariant under \( A=T\oplus t\). 
\end{lemma}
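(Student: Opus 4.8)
The plan is to verify that the functional $P_\Lc$ is unchanged under the action $A = T \oplus t$, where $(A\phi)(x) = \phi(Tx) + t$ with $T \in \GL(n)$ and $t \in \R$. The strategy is to track separately how each of the two integral factors $\int e^{-\phi}\,dx$ and $\int e^{-\Lc\phi}\,dx$ transforms, and then observe that the Jacobian factors and the additive-constant factors cancel in the product.

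First I would handle the additive constant $t$, which is the easier half. Directly from the definition of the Legendre transform one computes
\[
\Lc(\phi + t)(y) = \sup_x\{\langle x,y\rangle - \phi(x) - t\} = \Lc\phi(y) - t.
\]
Hence adding $t$ to $\phi$ multiplies $\int e^{-\phi}$ by $e^{-t}$ and multiplies $\int e^{-\Lc\phi}$ by $e^{t}$, so the two scalar factors $e^{-t}$ and $e^{t}$ cancel in the product $P_\Lc$.

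Next I would treat the linear part $T$. The change of variables $x \mapsto T^{-1}x$ gives
\[
\int_{\R^n} e^{-\phi(Tx)}\,dx = |\det T|^{-1}\int_{\R^n} e^{-\phi(x)}\,dx.
\]
For the dual factor, the key computation is that the Legendre transform intertwines $T$ with its inverse transpose: starting from $(T\phi)(x) = \phi(Tx)$ and substituting $x = T^{-1}u$ in the supremum,
\[
\Lc(T\phi)(y) = \sup_x\{\langle x,y\rangle - \phi(Tx)\} = \sup_u\{\langle T^{-1}u, y\rangle - \phi(u)\} = \sup_u\{\langle u, (T^{-1})^* y\rangle - \phi(u)\} = (\Lc\phi)\bigl((T^{-1})^* y\bigr).
\]
Applying the change of variables $y \mapsto T^{*}y$ to $\int e^{-\Lc(T\phi)}\,dy$ then produces the Jacobian factor $|\det T^{*}| = |\det T|$. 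Since the first factor acquired $|\det T|^{-1}$ and the dual factor acquires $|\det T|$, these cancel as well.

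Combining the two observations, $P_\Lc(A\phi) = P_\Lc(\phi)$ for every $A = T \oplus t$. The computation is entirely routine, so there is no genuine obstacle; the only point worth stating carefully is the intertwining identity $\Lc \circ T = (T^{-1})^* \circ \Lc$ (in the sense above), since it is this fact — rather than the scalar bookkeeping — that makes the two determinant factors exact inverses of one another. For this reason the paper is correct to note that the proof is elementary and to omit the details.
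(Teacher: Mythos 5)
Your proof is correct and is exactly the standard argument the paper has in mind when it omits the details: the additive constant produces reciprocal factors $e^{-t}$ and $e^{t}$ via $\Lc(\phi+t)=\Lc\phi-t$, and the linear part produces reciprocal Jacobians via the intertwining identity $\Lc(\phi\circ T)=(\Lc\phi)\circ (T^{-1})^{*}$. Nothing is missing.
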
 
%

\par 
Now we specify  a subset of convex functions 
\be 
\Ss_2 :=  
\bigcup_{0\le t \le  n}\{ \phi\in \cvx^+:   te_1+B_2^n\subset G_\phi(2n)\subset 2nB_2^n, \inf \phi =0, 0\le \phi(0)\le n\}.
\ee
  Let us observe  that any  convex function \( \phi\in \cvx^+\) with centroid 0 can be brought into \( \mathscr{S}_2\) by  some transformation \( A=T\oplus t\). To this end, we need a result of Fradelizi (\cite{Fradelizi1997},  see also, e.g., Theorem 10.6.2 in \cite{AGM2014}).
\begin{lemma}[\cite{Fradelizi1997}]\label{fradelizislemma}
	Let \( f= e^{-\phi}: \R^n\to [0,\infty)\) be a convex function satisfying \( \int fdx<\infty\) and \( \int xfdx=0\). Then \( f(0)\le \|f\|_\infty\le e^nf(0)\). 
\end{lemma}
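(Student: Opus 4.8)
The plan is to prove the nontrivial right-hand inequality \( \|f\|_\infty \le e^n f(0) \); the left-hand bound \( f(0)\le \|f\|_\infty \) is immediate. Since \( f=e^{-\phi} \) is log-concave and integrable it is bounded, so I may normalise \( \|f\|_\infty=1 \), and then \( \phi=-\log f \) is a convex function with \( \inf\phi=0 \), attained (or approached) at some point \( x_0 \). Because the barycenter of \( f \) is at the origin, the origin lies in the interior of \( \dom(\phi) \) and \( f(0)>0 \); thus the claim reduces to showing \( \phi(0)\le n \).

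The first step is Jensen's inequality. Writing \( d\mu = e^{-\phi}\,dx\big/\int e^{-\phi}\,dx \) for the probability measure associated with \( f \), the hypothesis \( \int x\,e^{-\phi}\,dx=0 \) says exactly that the barycenter of \( \mu \) is the origin. Convexity of \( \phi \) then yields
\[
\phi(0)=\phi\Big(\int x\,d\mu\Big)\le \int \phi\,d\mu = \frac{\int_{\R^n}\phi\, e^{-\phi}\,dx}{\int_{\R^n} e^{-\phi}\,dx}.
\]
So it suffices to establish the dimension-free moment bound \( \int_{\R^n}\phi\, e^{-\phi}\,dx\le n\int_{\R^n} e^{-\phi}\,dx \), valid for any convex \( \phi \) with \( \inf\phi=0 \).

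The second step proves this moment bound by a divergence identity. Applying the divergence theorem to the vector field \( (x-x_0)\,e^{-\phi(x)} \), whose flux at infinity vanishes by the fast decay of \( e^{-\phi} \), gives
\[
n\int_{\R^n} e^{-\phi}\,dx=\int_{\R^n}\big((x-x_0)\cdot\nabla\phi\big)\,e^{-\phi}\,dx .
\]
On the other hand, the subgradient inequality at the minimiser \( x_0 \) (where \( \phi(x_0)=0 \)) reads \( \phi(x_0)\ge \phi(x)+\nabla\phi(x)\cdot(x_0-x) \), i.e.\ \( (x-x_0)\cdot\nabla\phi(x)\ge \phi(x) \) pointwise. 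Substituting this into the identity yields \( n\int e^{-\phi}\ge\int \phi\,e^{-\phi} \), which is the desired bound; combined with the Jensen step it gives \( \phi(0)\le n \) and hence \( f(0)\ge e^{-n}=e^{-n}\|f\|_\infty \). The extremal profile \( \phi(x)=\|x\| \) makes the estimate sharp, consistent with the constant \( e^n \).

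The main technical obstacle is that a convex \( \phi \) is only differentiable almost everywhere, so the gradient identity and the divergence theorem must be justified with care. I would handle this either by approximating \( \phi \) from within its domain by smooth convex functions (for which both steps are classical) and passing to the limit, or---more robustly---by recasting the whole argument through the layer-cake formula: with \( V(t)=\vol(\{\phi\le t\}) \) one has \( \int e^{-\phi}=\int_0^\infty V(t)e^{-t}\,dt \) and, after integrating by parts, \( \int \phi\,e^{-\phi}=\int_0^\infty (t-1)V(t)e^{-t}\,dt \), so the moment bound becomes \( \int_0^\infty t\,V(t)e^{-t}\,dt\le (n+1)\int_0^\infty V(t)e^{-t}\,dt \). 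This last inequality follows from the Brunn--Minkowski theorem, which guarantees that \( t\mapsto V(t)^{1/n} \) is concave and nondecreasing, with the equality case \( V(t)\propto t^n \) recovering the Gamma-distribution identity \( \mathbb{E}[t]=n+1 \). This reformulation avoids gradients entirely and isolates convexity into the single clean fact that \( V^{1/n} \) is concave.
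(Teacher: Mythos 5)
The paper does not actually prove this lemma --- it is imported verbatim from Fradelizi's paper (and Theorem 10.6.2 of the Artstein-Avidan--Giannopoulos--Milman book) and used as a black box --- so there is no in-paper argument to compare yours against; what matters is whether your proof stands on its own, and it essentially does. Your reduction is the right one: normalise $\inf\phi=0$, use Jensen's inequality with the probability measure $e^{-\phi}\,dx/\int e^{-\phi}\,dx$ (whose barycenter is $0$) to get $\phi(0)\le\int\phi\,e^{-\phi}/\int e^{-\phi}$, and then prove the sharp moment bound $\int\phi\,e^{-\phi}\le n\int e^{-\phi}$, for which the cone $\phi(x)=\|x\|_2$ is indeed the equality case. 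The divergence-theorem route has the regularity and boundary issues you identify (in particular, if $\dom(\phi)\neq\R^n$ one must check that the boundary flux $\int_{\partial\dom(\phi)}(x-x_0)\cdot\nu\,e^{-\phi}$ has the favourable sign, which it does since $x_0\in\dom(\phi)$), and if the infimum is only approached one should take $x_0$ with $\phi(x_0)<\eps$ and let $\eps\to0$; your layer-cake alternative avoids all of this and is the cleaner path. The one place where you are too quick is the final sentence: the inequality $\int_0^\infty t\,V(t)e^{-t}\,dt\le(n+1)\int_0^\infty V(t)e^{-t}\,dt$ does not follow from concavity of $V^{1/n}$ by mere assertion. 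You need the extra (standard) step: concavity of $V^{1/n}$ with $V(0)\ge0$ gives that $t\mapsto V(t)/t^n$ is nonincreasing, so the density proportional to $V(t)e^{-t}=\bigl(V(t)/t^n\bigr)\cdot t^ne^{-t}$ is a nonincreasing reweighting of the Gamma$(n+1)$ density, and Chebyshev's correlation inequality (or stochastic domination) then bounds its mean by $n+1$. With that line supplied, the proof is complete and sharp.
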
   
  In fact, if \( \phi\in \cvx^+\) with centroid 0, then \( \inf \phi>-\infty\). 
  Let \( \tilde{\phi}=\phi-\inf \phi\) so that \( \inf \tilde{\phi}=0\). Moreover, \( c(\tilde{\phi})=0\). Thus,  deducing from  Lemma \ref{fradelizislemma},  one has \( 0\le \tilde{\phi}(0)\le n\). 
Therefore,  \( 0\in G_{\tilde{\phi}}(2n)\). If \(0\) is on the boundary of the sub-level set \( G_{\tilde{\phi}}(2n)\) then 
  \(0\) is on the boundary of \(\dom(\tilde{\phi})\), this is not the case as \(c(\tilde{\phi})=0\). Hence \( 0\in  \intt\left(  G_{\tilde{\phi}}(2n) \right) \). By the John's theorem \ref{johnthm},  suitable linear transform \(T\) and rotation will bring \( \tilde{\phi}\) to \(\Ss_2\) (cf. (\ref{nonsymjohn})). 
\par 
\par 
\begin{lemma}\label{cptcvxc0}
\( \Ss_2\) is closed and hence compact.
\end{lemma}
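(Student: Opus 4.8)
The plan is to show that $\Ss_2$ is sequentially closed inside the ambient compact space $(\sc(\R^n),\tau)$ (Proposition \ref{dolecki1983}); since $\tau$ is metrizable this is the same as closedness, and a closed subset of a compact space is compact. So I take $\{\phi_m\}\subset\Ss_2$ with $\phi_m\xrightarrow{\tau}\phi$ and check that $\phi$ meets every clause in the definition of $\Ss_2$. First, $\phi$ is a closed convex function: the epigraphs $\epi(\phi_m)$ are convex and $\tau$-converge to $\epi(\phi)$, so $\epi(\phi)$ is convex by Lemma \ref{continuityofsetlimit} and closed as a $\tau$-limit, hence $\phi$ is convex and lower semicontinuous. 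For each $m$ fix $t_m\in[0,n]$ realizing $t_me_1+B_2^n\subset G_{\phi_m}(2n)\subset 2nB_2^n$, and pass to a subsequence with $t_m\to t_0\in[0,n]$. This part runs parallel to Lemma \ref{compactnesss1}, but the extra clauses $\inf\phi=0$ and $0\le\phi(0)\le n$ make the sequential route cleaner.

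Next I pin down the infimum. Each $\phi_m\ge 0$, so every point of $\liminf\epi(\phi_m)$ has nonnegative last coordinate; since $\epi(\phi)\subset\liminf\epi(\phi_m)$, this gives $\phi\ge 0$ and $\inf\phi\ge 0$. For the reverse inequality I pick near-minimizers $x_m$ with $\phi_m(x_m)<1/m$; because $x_m\in G_{\phi_m}(1/m)\subset G_{\phi_m}(2n)\subset 2nB_2^n$, a subsequence converges to $x_0\in 2nB_2^n$, and then $(x_m,1/m)\in\epi(\phi_m)$ together with $\limsup\epi(\phi_m)\subset\epi(\phi)$ forces $(x_0,0)\in\epi(\phi)$, i.e. $\phi(x_0)\le 0$. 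Hence $\inf\phi=0$. Applying the same $\limsup$-inclusion to the points $(0,\phi_m(0))$, after extracting $\phi_m(0)\to c\in[0,n]$, yields $(0,c)\in\epi(\phi)$, so $0\le\phi(0)\le c\le n$, which is the constraint on the value at the origin.

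With $\inf\phi=0\ne 2n$ in hand, I may now invoke Lemma \ref{convlevelset} to obtain $G_{\phi_m}(2n)\to G_\phi(2n)$ in the Hausdorff (equivalently $\tau$) sense. The outer inclusion survives because $2nB_2^n$ is closed, giving $G_\phi(2n)\subset 2nB_2^n$; for the inner inclusion I use that $t_me_1+B_2^n\to t_0e_1+B_2^n$ with $t_me_1+B_2^n\subset G_{\phi_m}(2n)$, so every point of $t_0e_1+B_2^n$ is a limit of points of $G_{\phi_m}(2n)$ and thus lies in $G_\phi(2n)$. This establishes $t_0e_1+B_2^n\subset G_\phi(2n)\subset 2nB_2^n$ with $t_0\in[0,n]$. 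Finally, to place $\phi$ in $\cvx^+$: the inner inclusion gives $\phi\le 2n$ on a set of volume $\vol(B_2^n)$, whence $\int e^{-\phi}\ge e^{-2n}\vol(B_2^n)>0$; and boundedness of $G_\phi(2n)$ inside a fixed ball together with $\inf\phi=0$ forces at-least-linear growth of $\phi$ away from its minimizer, exactly as in the comparison argument of Proposition \ref{integralclosedpolar}, giving $\int e^{-\phi}<\infty$. Collecting everything shows $\phi\in\Ss_2$.

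The step I expect to need the most care is the ordering around the level $2n$: Lemma \ref{convlevelset} applies only at heights different from $\inf\phi$, so the identification $\inf\phi=0$ must be secured first, and it must come from the epigraph $\limsup$-inclusion rather than from pointwise limits, since the infimum can a priori drop under $\tau$-convergence. The second delicate point is the survival of the \emph{inner} inclusion $t_0e_1+B_2^n\subset G_\phi(2n)$, because inclusions between sets interact asymmetrically with $\liminf$ and $\limsup$ of sets; here it works precisely because the inner balls themselves converge. The growth estimate yielding $\int e^{-\phi}<\infty$ is routine given Proposition \ref{integralclosedpolar} but is the only genuinely quantitative ingredient.
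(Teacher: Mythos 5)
Your proof is correct, but it takes a genuinely different route from the paper's. The paper proves this lemma in the same style as Lemma \ref{compactnesss1}: it encodes the constraints \(\phi\ge 0\), \(G_\phi(2n)\subset 2nB_2^n\) and \(\phi(0)\le n\) into a closed ambient collection \(\Es\) built from the subbasic sets \(\Fc^K\), \(\Fc_G\), and then shows \(\Es\setminus\Es_2\) is open by exhibiting, around each excluded epigraph \(F\), an explicit subbasic neighborhood (reusing the tubes \(O_t\times(a_t,1]\) and the lower semicontinuous distance \(d_F\) from the proof of Lemma \ref{compactnesss1}). You instead argue sequentially, which is legitimate by the metrizability of \(\tau\) recorded in Remark 2.2(3), and is in fact the style the paper itself adopts for the easier Lemmas \ref{cptcvx0even} and \ref{cptcvx0c0}. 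Your route buys a cleaner treatment of exactly the two clauses that make \(\Ss_2\) more delicate than \(\Ss_1\): you extract \(\inf\phi=0\) and \(\phi(0)\le n\) directly from the inclusion \(\limsup\epi(\phi_m)\subset\epi(\phi)\) applied to near-minimizers and to \((0,\phi_m(0))\), and you replace the paper's Case 2 neighborhood construction by the elementary observation that \(t_me_1+B_2^n\to t_0e_1+B_2^n\) forces the inner inclusion to survive in the limit once Lemma \ref{convlevelset} (correctly applied only after \(\inf\phi=0\ne 2n\) is secured) gives \(G_{\phi_m}(2n)\to G_\phi(2n)\). You also verify \(\phi\in\cvx^+\) explicitly via the comparison argument of Proposition \ref{integralclosedpolar}, a point the paper's proof leaves implicit. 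What the paper's approach buys in exchange is independence from metrizability and a uniform template for all four closedness lemmas; your compactness extraction of \(t_0\) and of the near-minimizers plays the role that the uniform positivity of \(\inf_t d_{F_1}(te_1+B_2^n)\) plays there. Both arguments are sound.
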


\begin{proof}
	Let 
	\(
	\mathscr{E}_2=\{ F\in \Fc: F= \epi(\phi) \text{ for some } \phi\in \mathscr{S}_2\}.
	\)
Let
\begin{eqnarray*}
	& &
	\Es=\left( \bigcap_{G\in \Gc} \Fc^{G\times (-\infty,0)}\right) \bigcap\left( \bigcap_{G\in \Gc, G\subset (2nB_2^n)^c} \Fc^{G\times (-\infty,2n)}\right) \bigcap\left( \bigcap_{a\ge n}\Fc_{ \{0\}\times [n,a]}\right)\\
	& &\bigcap \{ F\in \Fc: F=\epi(\phi) ,\phi\in \sc(\R^n)\}\bigcap \{ \text{convex sets}\}.  
\end{eqnarray*}
\( \Es\) is closed since all collections in the intersection are closed in \(\tau\)-topology (cf. Proposition \ref{cvx0compact}). We will show that \( \Es\setminus \Es_2\) is open. 

	\par 
	
	Suppose that \(F\in \Es \setminus \Es_2\) and let \( \phi_F\) be such that \( \epi(\phi_F)=F\). Then \(F\) fits  in at least one of  the following cases.
	
	Case 1. \(\phi_F\)  satisfies  \( \inf \phi_F> 0 \). There exists \( 0<a<\inf \phi_F\)  such that \( F\cap (2nB_2^n\times [0,a])=\emptyset\).  
	Therefore \(       F\in \Fc^{2nB_2^n\times [0,a]}       \). It's easy to see that \( \Fc^{2nB_2^n\times [0,a]}\cap \Es_2=\emptyset\). 
	
	Case 2. \(F\cap \{x_{n+1}=1\} \subset 2nB_2^n\) does not contain any    \( {\intt} ( te_1+ B_2^n)\), \( t\in [0,n]\).  We have shown that there are \(\{x_t\in \R^n\}_{0\le t\le  n}\), \( \{0<a_t<1\}_{0\le t\le  n}\) and \( O_t\in \Gc, 0\le t\le n, G'\in \Gc \) such that 
	\( 
	x_t\in O_t,\ \ \  O_t\cap te_1+B_2^n\neq \emptyset,\ \ \ O_t\times (a_t,1]\subset (G')^c\subset F^c
	\)
	for all \( 0\le t\le n\) (cf. (\ref{ben})). 
	 
%
%
%
    \par 
    Let \[ \Oc= \left(   \Fc^{2nB_2^n\times [0,a]}  \bigcup  \Fc^{	\cl \left(\bigcup_{0\le t\le n} O_t\times (a_t,1]\right)} \right) \bigcap \Es.   \]
    Then \(\Oc\) is an open neighborhood of \(F\) that keeps \(F\) away from \( \Es_2\). 
\end{proof}

%

\begin{proof}[Proof of Theorem \ref{mainthm2}]
	The continuity of \( P_\Lc(\cdot)\) on \( \cvx^+(\R^n)\) is established in the same way with that of \( P_\Ac(\cdot)\).  Now for the first statement, note that the subset of even functions in \( \cvx^+(\R^n)\) is, up to a vertical shift, the same set as the set of even geometric convex functions. But \( P_\Lc(\cdot)\) is invariant under  linear transformations and  vertical shifts, it suffices to consider \( P_\Lc(\cdot)\) for \( \Ss_e\). 
	Therefore,  the theorem follows from the compactness of \( \mathscr{S}_e\) (Lemma \ref{cptcvx0even}). For the second statement, note that the range of \(P_\Lc \) on the set of functions in \(\cvx^+(\R^n)\) with centroid 0 is the same as that of \(P_\Lc\) on   \( \Ss_2\)  and the latter is shown to be compact (Lemma \ref{cptcvxc0}). 
\end{proof}
\par

For the functional  \( 
P_\Jc(\phi)=\frac{\int e^{-\phi}}{\int e^{-\Jc \phi}} \)
defined on \( \cvx_{0}^+\),
it is readily seen that \( P_\Jc(\cdot)\) is invariant under linear transform \( T\in \GL(n)\). In fact, \( \Jc(T\phi)= \Lc\Ac(T\phi)= \Lc((T^{-1})^t(\Ac\phi))=T(\Lc\Ac\phi)=T(\Jc\phi)\).
Thus along the same line with the case for \( \Ac\) transform, we have the following theorem which is also easily inferred from the results in \cite{alexdan}.

\begin{theorem}
			The functional \( P_\Jc(\cdot)\) achieves maximum and minimum on the subset of even functions in \( \cvx_0^+(\R^n)\).
		It also achieves maximum and minimum on the subset of functions in \( \cvx_0^+(\R^n)\) with centroid 0.	
\end{theorem}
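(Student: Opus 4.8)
The plan is to follow the proof of Theorem~\ref{mainthm1} almost verbatim, with $\Ac$ replaced by $\Jc$, since every ingredient used there is now available for the gauge transform. First I would exploit the $\GL(n)$-invariance of $P_\Jc$ (the identity $\Jc(T\phi)=T(\Jc\phi)$ verified above) to reduce the two assertions to showing that $P_\Jc$ attains its maximum and minimum on the compact sets $\Ss_e$ and $\Ss_{1,c}$, respectively. Indeed, any even $\phi\in\cvx_0^+$ is carried into $\Ss_e$ by a suitable $T\in\GL(n)$, and any $\phi\in\cvx_{0,c}^+$ is carried into $\Ss_{1,c}$ by a linear map (a $\GL(n)$ factor followed by a rotation) which preserves the centroid; since $P_\Jc$ is constant on $\GL(n)$-orbits, its range over the even functions (resp.\ over the centroid-$0$ functions) equals its range over $\Ss_e$ (resp.\ $\Ss_{1,c}$). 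These two sets are compact by Lemma~\ref{cptcvx0even} and Lemma~\ref{cptcvx0c0}.

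The core of the proof is the $\tau$-continuity of $P_\Jc$ on $\Ss_e$ and $\Ss_{1,c}$. Suppose $\phi_m\xrightarrow{\tau}\phi$ with $\phi_m$ in $\Ss_e$ (resp.\ $\Ss_{1,c}$); then $\phi$ lies in the same set by its closedness. Lemma~\ref{ptwconvergencedenseset} gives $\phi_m\to\phi$ pointwise on a dense set, so Lemma~\ref{lemmafromAKM}(1) yields $\int e^{-\phi_m}\to\int e^{-\phi}$. For the denominator I would invoke the $\tau$-continuity of $\Jc$ (which follows from $\Jc=\Ac\circ\Lc$ together with Proposition~\ref{congoftau} and Proposition~\ref{contioflegendre}) to get $\Jc\phi_m\xrightarrow{\tau}\Jc\phi$. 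The defining conditions of $\Ss_e$ and $\Ss_{1,c}$ force $0\in\intt\dom(\phi)$, so the Blaschke-Santal\'o type bound for $\Jc$ of \cite{alexdan} guarantees $0<\int e^{-\Jc\phi}<\infty$; in particular $\dom(\Jc\phi)$ has nonempty interior, and applying Lemma~\ref{ptwconvergencedenseset} to $\Jc\phi_m$ followed by Lemma~\ref{lemmafromAKM}(1) gives $\int e^{-\Jc\phi_m}\to\int e^{-\Jc\phi}$. Dividing the two limits produces $P_\Jc(\phi_m)\to P_\Jc(\phi)$.

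Finally, a continuous real-valued function on a compact space attains its extrema, so $P_\Jc$ achieves both a maximum and a minimum on each of $\Ss_e$ and $\Ss_{1,c}$, which settles both statements of the theorem. I do not anticipate a serious obstacle, as the two nontrivial inputs---the $\tau$-continuity of $\Jc$ and the compactness of $\Ss_e$, $\Ss_{1,c}$---are already established; the only point demanding slight care is confirming that the quotient $P_\Jc$ is finite and strictly positive throughout, so that taking the ratio of the two integral limits is legitimate, and for this the two-sided estimate of \cite{alexdan} is precisely what is required.
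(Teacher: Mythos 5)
Your proposal is correct and follows exactly the route the paper intends: the paper omits the proof precisely because it proceeds ``along the same line'' as Theorem~\ref{mainthm1}, using the $\GL(n)$-invariance $\Jc(T\phi)=T(\Jc\phi)$, the compactness of $\Ss_e$ and $\Ss_{1,c}$, and the $\tau$-continuity of $\Jc$. Your extra care about the denominator being strictly positive and finite (so that the quotient of limits is legitimate), justified via the two-sided estimate of \cite{alexdan}, is a sensible and correct addition.
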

We omit the proof.

\vspace{.1in}
\bibliographystyle{amsplain} 
%


\bibliography{refconvexanalysis}

\end{document}